\selectfont\symbol{60}\fontencoding{\encodingdefault}}
\selectfont\symbol{62}\fontencoding{\encodingdefault}}
\newcommand{\assign}{:=}
\newcommand{\backassign}{=:}
\newcommand{\cdummy}{\cdot}
\newcommand{\mathd}{\mathrm{d}}
\newcommand{\nin}{\not\in}
\newcommand{\nobracket}{}
\newcommand{\of}{:}
\newcommand{\textdots}{...}
\newcommand{\tmabbr}[1]{#1}
\newcommand{\tmaffiliation}[1]{\\ #1}
\newcommand{\tmem}[1]{{\em #1\/}}
\newcommand{\tmemail}[1]{\\ \textit{Email:} \texttt{#1}}
\newcommand{\tmmathbf}[1]{\ensuremath{\boldsymbol{#1}}}
\newcommand{\tmop}[1]{\ensuremath{\operatorname{#1}}}
\newcommand{\tmsep}{, }
\newcommand{\tmstrong}[1]{\textbf{#1}}
\newcommand{\tmtextit}[1]{{\itshape{#1}}}
\newcommand{\ack}{\flushleft\textbf{Acknowledgements.}\quad}
\newenvironment{proof}{\noindent\textbf{Proof\ }}{\hspace*{\fill}$\Box$\medskip}
\newtheorem{theorem}{Theorem}[section]
\newtheorem{axiom}[theorem]{Assumption}
\newtheorem{definition}[theorem]{Definition}
\newtheorem{lemma}[theorem]{Lemma}
{\theorembodyfont{\rmfamily}\newtheorem{remark}[theorem]{Remark}}
\newcommand{\tmkeywords}{\textbf{Keywords:} }
\newcommand{\tmmsc}{\textbf{A.M.S. subject classification:} }
\newcommand{\suppressed}[1]{\, #1 \,}
\newcommand{\explicitspace}{ }
\begin{document}

\title{Rough invariance principle for delayed regenerative processes}

\author{
  Tal Orenshtein
  \tmaffiliation{Technische Universit{\"a}t Berlin and Weierstrass Institute\\
  Strasse des 17. Juni 136, 10623 Berlin and Mohrenstrasse 39 10117 Berlin}
  \tmemail{orenshtein@wias-berlin.de}
}

\maketitle

\begin{abstract}
We derive an invariance principle for the lift to the rough path topology of stochastic processes with delayed regenerative increments under an optimal moment condition. An interesting feature of the result is the emergence of area anomaly, a correction term in the second level of the limiting rough path which is identified as the average stochastic area on a regeneration interval. A few applications include random walks in random environment and additive functionals of recurrent Markov chains. The result is formulated in the p-variation settings, where a rough Donsker Theorem is available under the second moment condition. The key renewal theorem is applied to obtain an optimal moment condition.
\end{abstract}

\tmmsc{60K37}{\tmsep}{ 60F17}{\tmsep}{ 82C41}{\tmsep}{ 82B43}

\tmkeywords{Invariance principle}{\tmsep}{rough paths}{\tmsep}{$p$-variation}{\tmsep}{area anomaly}{\tmsep}{regenerative process}{\tmsep}{key
renewal theorem}{\tmsep}{random walks in random environment.}

\section{Introduction}

Donsker's invariance principle states that a diffusively
rescaled centered random walk on $\mathbb{R}^d$ with jumps of finite variance
converges in distribution to a Brownian motion in the Skorohod topology. Regenerative
processes are a more general class: they are assumed to contain an infinite
subsequence of times on which the induced process is a random walk. The
natural strategy to prove an invariance principle here is to first prove it for that
subsequence, and then to show that the fluctuations of the
original process coincide with the ones of the approximating sequence in the limit. However,
when lifting regenerative processes to the rough path space a surprising
feature appears in the limiting rough path. The first level is naturally, the Brownian motion defined by the
covariance matrix achieved in the classical case, whereas the second level (see Section \ref{subseq:elements} for more
details) does not coincide with the iterated integral of the Brownian motion,
but should be corrected by a deterministic process. The latter is called
`area anomaly' and is linear in time and identified in terms of the
stochastic area on a regeneration time interval, see for example
{\eqref{eq:area:anomaly}} below. This provides non-trivial and new information on the
limiting path which is not captured in the classical invariance principle.
This information is crucial in order to describe the limit of stochastic differential equations (SDE) of the form
\[
 Y_t^{(n)} = Y_0 + \int_0^t b(Y_s^{(n)}) \mathd s +      \int_0^t \sigma(Y_s^{(n)}) \mathd X_s^{(n)},\quad t\in [0,T],
\]
where the driver $X^{(n)}$ is a linearly interpolated rescaled regenerative process, $b$ and $\sigma$ are smooth functions and the integral is in the sense of Riemann-Stieltjes. Even though $X^{(n)}$ converges weakly to a Brownian motion $B$, the limit of $Y^{(n)}$ does not satisfy the SDE with $b$ and $\sigma$ driven by $B$, but the is an additional drift which is explicit in terms of the area correction of the second level of the limiting rough path (denoted by $\Gamma$ in \eqref{eq:area:anomaly} below), see e.g \cite{DOP} line (1.1) and the discussion below it.

In this work we optimize the moment condition on
regeneration intervals that was assumed in {\cite{LO18}}. The main obstacle of {\cite{LO18}} is that it relies on the rough path extension of Donsker's Theorem
{\cite{breuillard2009random}} which is based on Kolmogorov's
tightness criterion on H{\"o}lder rough paths. As mentioned already in
{\cite{breuillard2009random}}, this was costly and therefore the assumed moment
condition was not optimal.
Instead of considering the somewhat heavy algebraic framework in H{\"o}lder rough path formalism,
we consider the parametrization-free $p$-variation settings which fits
better to jump processes and discrete-time processes.

Recently, a new machinery was
introduced to deal with regularity for jump processes in the rough path
topology. The main tool is the L{\'e}pingle Burkholder-Davis-Gundy (BDG) inequality lifted to the p-variation rough path settings.  The latter provides an equivalence
between the $(2 q)$-th moment of the $p$-variation norm of a local martingale
and the $q$-th moment of its quadratic variation, and allows to obtain the
rough version of Donsker's Theorem under the second moment condition, as in
the classical case. The proof is by now standard, however for completeness the details are included in Appendix \ref{app:donsker}.

In order to optimize the moment condition on a regeneration interval
for processes with regenerative increments so that the rough path version of Donsker's Theorem is used under not more than the second moment, we apply the Key Renewal Theorem. This theorem roughly says that the mass function of the process in a regeneration
interval around a fixed time (also called `age' in the renewal theory jargon)
is approaching a density which is proportional to the uniform measure of an
independent copy of the interval, namely, its size-biased version. This result
is sharp and in particular guarantees that in the limit as time goes to
infinity the $m$-th moment of a regeneration interval around a deterministic
time, and the $(m + 1)$-st moment of a fixed regeneration interval are equal
up to a constant. The result extends
the classical Donsker Theorem for these processes with no extra regularity
assumption.

There has been some progress related to random walks in the rough topology in
the past two decades. The closest works, generalized in this paper are
{\cite{LO18,lopusanschi2017levy,lopusanschi2017area}}. In the context of
semimartingales and rough paths with jumps
{\cite{chevyrev2018random,FZ18,CF19}}, CLT on nilpotent covering graphs and
crystal lattices
{\cite{ishiwata2018central,namba2017remark,ishiwata2018centralparttwo}},
additive functional of Markov process and random walks in random environment
{\cite{DOP}}. For homogenization in the continuous settings
{\cite{coutin2005semi,lejay2003importance,lejay2002convergence}}, and for additive functionals of fractional random fields
{\cite{gehringer2019homogenization,gehringer2020functional,gehringer2020rough}}.

In the remaining part of this section we shall present the model and the main
result, Theorem \ref{thm:main}, after introducing the necessary rough path
theory elements in Section \ref{subseq:elements}. In Chapter 2 we shall
mention some examples to which our main result applies, whereas its proof is
given in Chapter 3. We also included three short appendices which might be
useful in other context.

\subsection{Preliminaries on rough paths}\label{subseq:elements}
For two families $(a_i)_{i \in I}, (b_i)_{i \in I}$ of real numbers indexed
  by $I$, we write $a_i \lesssim b_i$ if there is a positive constant $c$ so
  that $a_i \leqslant cb_i$ for all $i \in I$. We write $a_n \approx b_n$
  whenever $a_n - b_n \rightarrow 0$ as $n \rightarrow \infty$.
  Set $\mathbb{N}=\{1,2,...\}$, $\mathbb{N}_0=\mathbb{N}\cup\{0\}$ and $\Delta_T
  \assign \{ (s, t) \of 0 \leqslant s < t \leqslant T \}$ for $T > 0$.
  For a function $X : [0, T] \rightarrow \mathbb{R}^d$ we set $X_{s, t}
  \assign X_t - X_s$. We interpret $X$ also as a function on $\Delta_T$ given
  by $(s, t) \mapsto X_{s, t}$. For a metric space $E$ we write $C ([0,
  T], E)$ resp. $D ([0, T], E)$) for the $E$-valued continuous resp.\
  c{\`a}dl{\`a}g functions on $[0, T]$. We write $C (\Delta_T, E)$ resp. $D
  (\Delta_T, E)$) for the space of $E$-valued functions $X : \Delta_T \rightarrow E$ so
  that $t \mapsto X_{s, t}$ is continuous resp. c{\`a}dl{\`a}g on $[s, T]$,
  for every $s \in [0, T]$. By convention, whenever $E =\mathbb{R}^d,$ we write
  $| \cdummy |$ for the $d$-dimensional Euclidean norm.
  Also, the expectation of a vector (or matrix) valued random variable is
  understood coordinate- (or entry-) wise. For $x, y \in \mathbb{R}^d$ we
  write $x \otimes y \in \mathbb{R}^{d \times d}$ for the tensor product $(x
  \otimes y)_{i, j} = x_i y_j, i, j = 1, \ldots, d$, and $x^{\otimes 2}$ for $x \otimes x$. Whenever $(x_n)_{n}$ is a sequence of elements in $\mathbb{R}^d$ we write $x_n^i,i=1,...,d$, for their components.

For brevity, we shall focus on the necessary objects needed for introducing our results. We 
follow closely Chapter 2 of {\cite{DOP}}. The reader
is referred to Section $5$ of {\cite{FZ18}} for details on It{\^o}
$p$-variation rough paths with jumps and to {\cite{friz2014course}} and
{\cite{friz2010multidimensional}} for an extensive account of the theory of rough paths.

For a normed space $(E, \|
\cdummy \|_E)$, and a function $X \in C ([0, T], E)$ or $X \in C (\Delta_T,
E)$ we write $\| X \|_{\infty, [0, T]} \assign \sup_{(s, t) \in \Delta_T} \|
X_{s, t} \|_E $ to denote the {\tmem{uniform norm}} of $X$, and for any $p\in(0, \infty)$, we write $\| X \|_{p, [0, T]} \assign \left( \sup \sum_{[s, t]
\in \mathcal{P}} \| X_{s, t} \|^p_E \right)^{1 / p}$, where the supremum is
over all finite partitions $\mathcal{P}$ of $[0, T]$, to denote its
{\tmem{$p$-variation norm}}. A continuous rough path is a pair of
functions $(X, \mathbb{X}) \in C ([0, T], \mathbb{R}^d) \times C (\Delta_T,
\mathbb{R}^{d \times d})$ satisfying Chen's relation, that is
\begin{equation}\label{eq:Chens}
  \mathbb{X}_{s, t} -\mathbb{X}_{s, r} -\mathbb{X}_{r, t} = X_{s, r} \otimes
  X_{r, t} \text{\quad for all } \suppressed{\explicitspace} 0 \leqslant s < r
  < t \leqslant T.
\end{equation}

\begin{definition}[$p$-variation rough path space]
  For $p \in [2, 3)$, $C_p ([0, T], \mathbb{R}^d \times \mathbb{R}^{d
  \times d})$ is the space of all continuous rough paths $(X, \mathbb{X})$   such that
  \begin{equation}\label{eq:p-var norm of pair}
    \interleave (X, \mathbb{X}) \interleave_{p, [0, T]} \assign | X_0 | + \| X
    \|_{p, [0, T]} + \| \mathbb{X} \|_{p / 2, [0, T]} < \infty .
  \end{equation}
  The (uniform) {\tmem{$p$-variation distance}} $\sigma_{p, [0, T]}$
  is defined by taking the norm of the path defined by the diferences
  increment-wise:
  \[ \sigma_{p, [0, T]} ((X, \mathbb{X}), (Y, \mathbb{Y})) \assign
     \interleave (X - Y, \mathbb{X}-\mathbb{Y}) \interleave_{p, [0, T]} . \]
\end{definition}

We refer to $X$ as the first level of the rough path $(X, \mathbb{X})$ and to
$\mathbb{X}$ as its second level. We shall now state a simple and useful
sufficient condition for convergence in $p$-variation based on the convergence
in the uniform topology together with tightness of the $p$-variation norms,
see Lemma 2.3 in {\cite{DOP}} and which is based on Theorem 6.1 of
{\cite{FZ18}}.

\begin{lemma}[Sufficient condition for convergence in $p$-variation]
  \label{lem:tightness of norms is enough}Assume that \linebreak $(Z_n, \mathbb{Z}_{n})_{n\in\mathbb{N}}$ is
  a sequence of continuous rough paths and let $ p_0\in (2,3)$. Assume also that
  there exists a continuous rough path $(Z, \mathbb{Z})$ such that $(Z_n,
  \mathbb{Z}_n) \rightarrow (Z, \mathbb{Z})$ in distribution in the uniform
  topology and that the family of real valued random variables $(\| (Z_n,
  \mathbb{Z}_n) \|_{p_0, [0, T]})_{n \in \mathbb{N}}$ is tight. Then $(Z_n,
  \mathbb{Z}_n) \rightarrow (Z, \mathbb{Z})$ in distribution in the
  $p_0$-variation uniform topology $C_p ([0, T], \mathbb{R}^d \times
  \mathbb{R}^{d \times d})$ for all $p \in (p_0, 3)$.
\end{lemma}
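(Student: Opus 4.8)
\quad The plan is to transfer the statement, via a Skorokhod coupling, to a question about almost sure convergence, and then to use a Love--Young interpolation inequality to upgrade uniform convergence to $p$-variation convergence on the (asymptotically full) event where the $p_0$-variation norms are controlled. Since the space of continuous rough paths with the uniform topology is Polish, the Skorokhod representation theorem provides a common probability space carrying copies $(\tilde Z_n, \tilde{\mathbb{Z}}_n) \overset{d}{=} (Z_n, \mathbb{Z}_n)$ and $(\tilde Z, \tilde{\mathbb{Z}}) \overset{d}{=} (Z, \mathbb{Z})$ with $(\tilde Z_n, \tilde{\mathbb{Z}}_n) \rightarrow (\tilde Z, \tilde{\mathbb{Z}})$ almost surely in the uniform topology. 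The functional $(X, \mathbb{X}) \mapsto \| (X, \mathbb{X})\|_{p_0, [0, T]}$ is a supremum over finite partitions of functionals that are continuous for the uniform topology, hence lower semicontinuous and in particular Borel; thus $(\| (\tilde Z_n, \tilde{\mathbb{Z}}_n)\|_{p_0, [0, T]})_n$ has the same law as $(\| (Z_n, \mathbb{Z}_n)\|_{p_0, [0, T]})_n$ and is therefore tight, and by lower semicontinuity along the almost surely convergent sequence together with Fatou's lemma we also get $\| (\tilde Z, \tilde{\mathbb{Z}})\|_{p_0, [0, T]} \leqslant \liminf_n \| (\tilde Z_n, \tilde{\mathbb{Z}}_n)\|_{p_0, [0, T]}$ almost surely, so that $\mathbb{P}(\| (\tilde Z, \tilde{\mathbb{Z}})\|_{p_0, [0, T]} > M) \leqslant \sup_n \mathbb{P}(\| (Z_n, \mathbb{Z}_n)\|_{p_0, [0, T]} > M)$, which is small for $M$ large.

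\smallskip

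\noindent Fix $p \in (p_0, 3)$. The analytic input is the elementary interpolation bound: for any $Y$ (a path on $[0, T]$ or a function on $\Delta_T$) with values in a normed space, and any $0 < q_0 < q$,
\[
  \| Y\|_{q, [0, T]} \leqslant \| Y\|_{\infty, [0, T]}^{\,1 - q_0 / q}\, \| Y\|_{q_0, [0, T]}^{\,q_0 / q},
\]
which follows by estimating $\| Y_{s, t}\|^q = \| Y_{s, t}\|^{q - q_0}\| Y_{s, t}\|^{q_0} \leqslant \| Y\|_{\infty, [0, T]}^{q - q_0}\, \| Y_{s, t}\|^{q_0}$ inside each partition sum. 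We apply this to the first level with $Y = \tilde Z_n - \tilde Z$ and $(q_0, q) = (p_0, p)$, and to the second level with $Y = \tilde{\mathbb{Z}}_n - \tilde{\mathbb{Z}}$ and $(q_0, q) = (p_0 / 2, p / 2)$; note that the second level needs only to be handled as a plain function on $\Delta_T$, so Chen's relation plays no role. Combining with the triangle inequality $\| \tilde Z_n - \tilde Z\|_{p_0, [0,T]} \leqslant \| \tilde Z_n\|_{p_0, [0,T]} + \| \tilde Z\|_{p_0, [0,T]}$ and $\| \tilde{\mathbb{Z}}_n - \tilde{\mathbb{Z}}\|_{p_0 / 2, [0,T]} \leqslant \| \tilde{\mathbb{Z}}_n\|_{p_0 / 2, [0,T]} + \| \tilde{\mathbb{Z}}\|_{p_0 / 2, [0,T]}$ (valid since $p_0 \geqslant 2$ and $p_0 / 2 \geqslant 1$), we obtain, on the event $G_n^M \assign \{ \| (\tilde Z_n, \tilde{\mathbb{Z}}_n)\|_{p_0, [0, T]} \leqslant M\} \cap \{ \| (\tilde Z, \tilde{\mathbb{Z}})\|_{p_0, [0, T]} \leqslant M\}$,
\[
  \sigma_{p, [0, T]}\big( (\tilde Z_n, \tilde{\mathbb{Z}}_n), (\tilde Z, \tilde{\mathbb{Z}})\big) \leqslant | (\tilde Z_n)_0 - \tilde Z_0| + c\, M^{p_0 / p}\big( \| \tilde Z_n - \tilde Z\|_{\infty, [0, T]} + \| \tilde{\mathbb{Z}}_n - \tilde{\mathbb{Z}}\|_{\infty, [0, T]}\big)^{1 - p_0 / p}
\]
for a constant $c = c(p, p_0)$, and the right-hand side tends to $0$ almost surely on $G_n^M$ by the almost sure uniform convergence (which also yields $(\tilde Z_n)_0 \rightarrow \tilde Z_0$).

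\smallskip

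\noindent To finish, fix $\varepsilon > 0$ and use tightness (from the first paragraph) to pick $M$ with $\sup_n \mathbb{P}((G_n^M)^c) \leqslant \varepsilon$; then for every $\delta > 0$ the displayed estimate gives $\limsup_n \mathbb{P}\big( \sigma_{p, [0, T]}((\tilde Z_n, \tilde{\mathbb{Z}}_n), (\tilde Z, \tilde{\mathbb{Z}})) > \delta\big) \leqslant \varepsilon$, so $(\tilde Z_n, \tilde{\mathbb{Z}}_n) \rightarrow (\tilde Z, \tilde{\mathbb{Z}})$ in probability, hence in distribution, for the $p$-variation metric $\sigma_{p, [0, T]}$; since the copies carry the correct laws this is exactly the claim, for every $p \in (p_0, 3)$. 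I expect the one genuinely delicate point to be the standard measure-theoretic nuisance that $C_p([0, T], \mathbb{R}^d \times \mathbb{R}^{d \times d})$ is not separable for $\sigma_{p, [0, T]}$: this is circumvented by noting that the limit $(\tilde Z, \tilde{\mathbb{Z}})$ almost surely takes values in the separable closed subspace obtained as the $\sigma_{p, [0, T]}$-closure of piecewise-linear rough paths, on which the passage from convergence in probability to convergence in distribution is unproblematic. The interpolation estimates themselves are routine.
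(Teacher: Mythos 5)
Your proof is correct and is essentially the standard argument behind this statement: the paper itself offers no proof, quoting Lemma 2.3 of \cite{DOP} (in turn based on Theorem 6.1 of \cite{FZ18}), and that argument is the same Skorokhod-representation-plus-interpolation scheme you use, resting on lower semicontinuity of the $p_0$-variation norm under uniform convergence and the bound $\| Y\|_{q,[0,T]}\leqslant \| Y\|_{\infty,[0,T]}^{1-q_0/q}\| Y\|_{q_0,[0,T]}^{q_0/q}$ applied levelwise. The one inaccuracy is in your closing separability remark: the $\sigma_{p,[0,T]}$-closure of canonical lifts of piecewise-linear paths contains only weakly geometric rough paths (those with $\mathrm{Sym}(\mathbb{Z}_{s,t})=\tfrac12 Z_{s,t}^{\otimes 2}$), so it need not contain a general, possibly non-geometric, limit $(Z,\mathbb{Z})$ allowed by the lemma; this is harmless for Theorem \ref{thm:main}, where $\Gamma$ is antisymmetric and the limit is weakly geometric, and in general it is repaired by writing $\mathbb{Z}_{s,t}$ as a weakly geometric part plus a symmetric correction $F_t-F_s$ with $F$ of finite $p_0/2$-variation, each of which lies in a separable $\sigma_{p,[0,T]}$-closed subspace.
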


\subsection{Main result}

Let $X = (X_n)_{n \in \mathbb{N}_0}$ be a discrete time stochastic process on
$\mathbb{R}^d$ defined on a probability space $(S, \mathcal{F}, \mathbb{P})$
and let $\mathbb{E}$ be the corresponding expectation. Assume that $X$ has a
delayed regenerative increments, that is there exists a sequence $0 = : \tau_0
< \tau_1 < \tau_2 <${\textdots} of  \linebreak $\mathcal{F}$-measurable
$\mathbb{N}_0$-valued random variables so that $(T_k, \{X_{\tau_k, \tau_k
+ m}, 0 \leqslant m \leqslant T_k \})_{k \in\mathbb{N}}$ is an i.i.d.\ family independent of
$(T_0, \{X_{0, m}, 0 \leqslant m \leqslant \tau_1 \})$ under $\mathbb{P}$, where $T_k =
\tau_{k + 1} - \tau_k$ are the {\tmem{regeneration intervals}} and $X_{\ell,
k} \assign X_k - X_{\ell}$ are the increments. Assume that $\mathbb{E}
[X_{\tau_1, \tau_2}] = 0$ and that $\mathrm{gcd} \{ j : p_j > 0 \} = d$ for
some $d \in \mathbb{N}$, where $p_j =\mathbb{P} (T_1 = j)$, that is, $T_1$ is
$d$-arithmetic. For any sequence $(X_k)_{k \in \mathbb{N}_0}$ of elements in
$\mathbb{R}^d$ we define
\begin{align}
  X^{(n)}_t & \assign  \frac{1}{\sqrt{n}} X_{\lfloor nt \rfloor} + \frac{nt -
  \lfloor nt \rfloor}{\sqrt{n}}  (X_{\lfloor nt \rfloor + 1} - X_{\lfloor nt
  \rfloor}) \quad\text{ and } \nonumber\\
  \mathbb{X}^{(n)}_{s, t} & \assign  \frac{1}{n} I^{\mathrm{Str}}_{\lfloor ns
  \rfloor, \lfloor nt \rfloor} (X) + \frac{n (t - s) - \lfloor nt \rfloor +
  \lfloor ns \rfloor}{n}  \big( I^{\mathrm{Str}}_{\lfloor ns \rfloor, \lfloor
  nt \rfloor + 1} (X) - I^{\mathrm{Str}}_{\lfloor ns \rfloor, \lfloor nt
  \rfloor} (X) \big), \label{eq:interpolation}
\end{align}
where for positive integers $M < N$

\begin{align*}
  I^{\mathrm{Str}}_{M, N} (X) & \assign \sum_{M + 1 \leqslant k \leqslant N} \left( X_{M, k - 1}
  \otimes X_{k - 1, k} - \frac{1}{2} X_{k - 1, k} \otimes X_{k - 1, k} \right)
  .
\end{align*}

\begin{remark}
  Note that
  \[ \mathbb{X}_{s, t}^{(n)} \assign \int_{(s, t]} \int_{(s, u)} \mathd
     X_v^{(n)} \otimes \mathd X_u^{(n)}, \]
  where the integration with respect to $\mathrm{d} X^{(n)}_v$ is in the sense
  of Riemann-Stieltjes.
\end{remark}

We shall now formulate the main regularity assumption. We remind the reader that for $Y\in\mathbb{R}^d$ we write $Y_k^i$ for the $i$-th component of $Y$, $i\in\{1,...,d\}$.
\begin{axiom}
  \label{ass:jumps}For all $ i\in\{1,...,d\}$, $m \in\{ 0, 1\}$ and $p \in
  \{ 0, 2 \}$
  \[ 0 < \mathit{{\tmstrong{{\tmem{}}}}} \mathbb{E} \bigg[\big( \Xi_m^i \big)^p T_k\bigg] <
     \infty, \]
  where $\Xi_m^i \assign \sup \{ | X^i_{\tau_m, \tau_m + k} | : 0 \leqslant k \leqslant   T_m \}$.
\end{axiom}

\begin{theorem}
  \label{thm:main}Let $X$ be a discrete time stochastic process satisfying
  Assumption \ref{ass:jumps}. Assume that $\mathit{{\tmstrong{{\tmem{}}}}}
  \mathbb{E} [X_{\tau_k, \tau_{k + 1}}] = 0$ for every $k \in \mathbb{N}_0.$ Then,
$(X^{(n)}, \mathbb{X}^{(n)})_{n\in\mathbb{N}}$ converges in distribution to $(B, \mathbb{B}+ \cdummy
  \Gamma)$ in $C_p ([0, T], \mathbb{R}^d \times \mathbb{R}^{d \times
  d})$ for every $p > 2$, where $B$ is a centered Brownian motion with
  covariance
  \begin{equation}\label{eq:cov_B}
    [B, B]_t = \frac{\mathbb{E} [X_{\tau_1, \tau_2}^{\otimes 2}]}{\mathbb{E}
    [T_1]} t,
  \end{equation}
  $\mathbb{B}$ is the Stratonovich iterated integral of $B$,
  that is $\mathbb{B}_{s, t} = \int_{(s, t]} B_{s, u} \otimes \circ \mathrm{d} B_u$
  and $\Gamma \in \mathbb{R}^{d \times d}$ is given by
  \begin{equation}\label{eq:area:anomaly}
    \Gamma = \frac{\mathbb{E} [A_{\tau_1, \tau_2} (X)]}{\mathbb{E} [T_1]},
  \end{equation}
  where $A_{M, N} (X) = \mathrm{Antisym} (\mathbb{X}^{(1)}_{M, N})$ is the
  antisymmetric part of the matrix $\mathbb{X}^{(1)}_{M, N}$ . The notation
  $\mathbb{B}+ \cdummy\Gamma$ above is for $\big(\mathbb{B}_{s,t}+ (t-s)\Gamma\big)_{(s,t)\in\Delta_T}$.
\end{theorem}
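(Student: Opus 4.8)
\quad
By Lemma \ref{lem:tightness of norms is enough} it suffices to show (i) $(X^{(n)}, \mathbb{X}^{(n)}) \rightarrow (B, \mathbb{B} + \cdummy \Gamma)$ in distribution in the uniform topology, and (ii) tightness of $\big( \interleave (X^{(n)}, \mathbb{X}^{(n)}) \interleave_{p_0, [0, T]} \big)_n$ for some $p_0 \in (2, 3)$. Set $\mu \assign \mathbb{E} [T_1]$, $V_k \assign X_{\tau_k, \tau_{k + 1}}$, $S_k \assign \sum_{l = 0}^{k - 1} V_l = X_{\tau_k}$ (taking $X_0 = 0$), $\Xi_k^i \assign \sup \{ | X^i_{\tau_k, \tau_k + j} | : 0 \le j \le T_k \}$ (as in Assumption \ref{ass:jumps}, extended to all $k$; $\Xi_k^i \stackrel{\mathrm{d}}{=} \Xi_1^i$ for $k \ge 1$) and $| \Xi_k |^2 \assign \sum_i (\Xi_k^i)^2$, and $N (m) \assign \max \{ k : \tau_k \le m \}$. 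Then $(S_k)_k$ is an $L^2$-martingale ($\mathbb{E} V_k = 0$ by hypothesis, $\mathbb{E} | V_k |^2 \le \sum_i \mathbb{E} [(\Xi_k^i)^2] < \infty$ since $T_k \ge 1$), and by the renewal strong law $N (m)/m \rightarrow 1/\mu$ a.s., hence $\sup_{t \le T} | N (\lfloor n t \rfloor)/n - t/\mu | \rightarrow 0$ a.s.

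\emph{Uniform convergence.} The algebraic input is that $I^{\mathrm{Str}}_{\cdot, \cdot} (X)$ satisfies Chen's relation $I^{\mathrm{Str}}_{M, N} (X) = I^{\mathrm{Str}}_{M, L} (X) + I^{\mathrm{Str}}_{L, N} (X) + X_{M, L} \otimes X_{L, N}$ for $M < L < N$, which follows from the telescoping identity for the left-point sums $\sum_k X_{M, k - 1} \otimes X_{k - 1, k}$ and additivity of $\sum_k X_{k - 1, k} \otimes X_{k - 1, k}$. Expanding $I^{\mathrm{Str}}_{0, \lfloor n t \rfloor} (X)$ at the regeneration times, with $K = N (\lfloor n t \rfloor)$, gives the cross terms $\sum_{k < K} S_k \otimes V_k$ (the discrete It{\^o} iterated integral of the walk), the intra-block terms $\sum_{k < K} I^{\mathrm{Str}}_{\tau_k, \tau_{k + 1}} (X)$, and an incomplete last-block remainder; likewise $X^{(n)}_t = \tfrac{1}{\sqrt n} S_K + \tfrac{1}{\sqrt n} X_{\tau_K, \lfloor n t \rfloor} + o(1)$. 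First, by $\mathbb{E} | V_1 |^2 < \infty$ (Borel--Cantelli), $\mathbb{E} [ |\Xi_1|^2 T_1 ] < \infty$, and Doob's maximal inequality, $\max_{k \le N (n T) + 1} | \Xi_k | = o (\sqrt n)$, $\max_{k \le N (n T) + 1} T_k | \Xi_k |^2 = o (n)$ and $\max_{k \le N (n T)} | S_k | = O (\sqrt n)$, so all the incomplete-block remainders vanish uniformly in $t \in [0, T]$. Second, the rough Donsker theorem of Appendix \ref{app:donsker} applied to the i.i.d.\ sequence $(V_k)_{k \ge 1}$, composed with the deterministic time change $t \mapsto t/\mu$, yields joint convergence of $\big( \tfrac{1}{\sqrt n} S_{N (\lfloor n \cdummy \rfloor)}, \tfrac{1}{n} \sum_{k < N (\lfloor n \cdummy \rfloor)} S_k \otimes V_k \big)$ to $\big( B, \int_0^{\cdummy} B_{0, u} \otimes \mathd B_u \big)$ in the It{\^o} sense, with $B$ a centered Brownian motion of covariance $\tfrac{\mathbb{E} [V_1 \otimes V_1]}{\mu} t = \tfrac{\mathbb{E} [X_{\tau_1, \tau_2}^{\otimes 2}]}{\mathbb{E} [T_1]} t$ as in \eqref{eq:cov_B}. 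Third, the matrices $I^{\mathrm{Str}}_{\tau_k, \tau_{k + 1}} (X)$ are i.i.d.\ for $k \ge 1$ with $\mathbb{E} | I^{\mathrm{Str}}_{\tau_1, \tau_2} (X) | < \infty$ --- each entry is $\lesssim T_1 | \Xi_1 |^2$, integrable by Assumption \ref{ass:jumps} --- so the uniform law of large numbers gives $\tfrac{1}{n} \sum_{k < N (\lfloor n \cdummy \rfloor)} I^{\mathrm{Str}}_{\tau_k, \tau_{k + 1}} (X) \rightarrow \tfrac{\cdummy}{\mu} \mathbb{E} [ I^{\mathrm{Str}}_{\tau_1, \tau_2} (X) ]$ a.s., uniformly on $[0, T]$, whose antisymmetric part is exactly the linear path $\cdummy \Gamma$ of \eqref{eq:area:anomaly}. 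Putting these together, using Chen's relation to pass from $\mathbb{X}^{(n)}_{0, \cdummy}$ to all of $\Delta_T$, and checking that the symmetric parts combine into those of $\mathbb{B}$ (either because $\mathbb{X}^{(n)}$ is, by the Remark, the iterated Riemann--Stieltjes integral of the piecewise linear $X^{(n)}$ --- whence $\mathrm{Sym} (\mathbb{X}^{(n)}_{s, t}) = \tfrac12 (X^{(n)}_{s, t})^{\otimes 2}$, and $\Gamma$ is antisymmetric --- or directly from the decomposition), one obtains $(X^{(n)}, \mathbb{X}^{(n)}) \rightarrow (B, \mathbb{B} + \cdummy \Gamma)$ uniformly in distribution.

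\emph{Tightness of the variation norms.} Write $(X^{(n)}, \mathbb{X}^{(n)}) = (\tilde X^{(n)}, \tilde{\mathbb{X}}^{(n)}) + (R^{(n)}, \mathbb{R}^{(n)})$, where $(\tilde X^{(n)}, \tilde{\mathbb{X}}^{(n)})$ is the canonical (Riemann--Stieltjes) lift of the piecewise linear interpolation of the rescaled walk $(\tau_k/n, \tfrac{1}{\sqrt n} S_k)_k$ and $(R^{(n)}, \mathbb{R}^{(n)})$ collects the intra-block contributions; the first level $R^{(n)}$ vanishes at every $\tau_k/n$ and has block oscillations $\le 2 | \Xi_k |/\sqrt n$, and $\mathbb{R}^{(n)}_{0, \tau_K/n} = \tfrac1n \sum_{k < K} A_{\tau_k, \tau_{k + 1}} (X)$. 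Since $(S_k)_k$ and its discrete iterated integral are martingales, the L{\'e}pingle Burkholder--Davis--Gundy inequality of Appendix \ref{app:donsker} --- trading the $2q$-th moment of the $p$-variation norm of a martingale for the $q$-th moment of its bracket --- applied with $q = 1$ and $q = 1/2$ shows, after the $n^{-1/2}$, resp.\ $n^{-1}$, rescaling, that the $p_0$-variation norm of $\tilde X^{(n)}$ has bounded second moment and the $p_0/2$-variation norm of $\tilde{\mathbb{X}}^{(n)}$ has bounded first moment, so the skeleton norms are tight (and unchanged by the reparametrisation). For the remainder: its uniform norm is $o(1)$ in probability by the maximal estimates above; its first-level $2$-variation obeys $\| R^{(n)} \|_{2, [0, T]}^2 \lesssim \tfrac1n \sum_{k \le N (n T) + 1} T_k | \Xi_k |^2$ (a partition either stays in a block --- where the squared $2$-variation is $\le T_k$ times the squared oscillation --- or crosses blocks through a value lying on the skeleton, costing only squared oscillations); its second-level $p_0/2$-variation is at most its total variation, which, by $| I^{\mathrm{Str}}_{\tau_k, \tau_{k + 1}} (X) | \lesssim T_k | \Xi_k |^2$ and the analogous sweeping bound inside blocks, is likewise $\lesssim \tfrac1n \sum_{k \le N (n T) + 1} T_k | \Xi_k |^2$. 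The expectation of this sum is $\lesssim \mathbb{E} [ |\Xi_1|^2 T_1 ] < \infty$ by Wald's identity and the renewal estimate $\mathbb{E} [N (n T)] \lesssim n$; it is here that the role of the \emph{key renewal theorem} is decisive --- it identifies the asymptotic law of the regeneration block straddling a deterministic time as the size-biased law $\propto t \, \mathbb{P} (T_1 \in \mathd t)$, so that moments of ``age''-type quantities are controlled by exactly $\mathbb{E} [(\Xi_m^i)^2 T_m] / \mathbb{E} [T_m]$, with no higher moment needed. Finally, $\| \cdummy \|_{p_0/2} \le \| \cdummy \|_1$ (since $p$-variation decreases in $p$) bounds the second-level remainder norm, while the interpolation inequality $\| f \|_{p_0, [0, T]}^{p_0} \le (2 \| f \|_{\infty, [0, T]})^{p_0 - 2} \| f \|_{2, [0, T]}^2$ sends the first-level remainder $p_0$-variation to $0$ in probability. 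Adding skeleton and remainder gives tightness of $\interleave (X^{(n)}, \mathbb{X}^{(n)}) \interleave_{p_0, [0, T]}$ for every $p_0 \in (2, 3)$, and Lemma \ref{lem:tightness of norms is enough} concludes.

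I expect the main obstacle to be the tightness of the $p_0/2$-variation of the second level $\mathbb{X}^{(n)}$ under nothing beyond the second-moment condition $\mathbb{E} [(\Xi_m^i)^2 T_m] < \infty$. A partition can place many vertices inside one long regeneration block, so a naive block-by-block estimate would demand a $p_0$-th moment of the block oscillation; the way out is to compare higher variation to lower variation (to $2$-variation on the first level, to total variation on the second), where the relevant quantity is \emph{linear} in the blocks and hence controlled by $\mathbb{E} [|\Xi_1|^2 T_1]$, together with the vanishing of the remainder's uniform norm and, for the blocks straddling deterministic times, the size-biasing supplied by the key renewal theorem. This interplay is precisely what makes the exponent range $p_0 \in (2, 3)$ --- equivalently, the second moment --- sufficient, rendering Assumption \ref{ass:jumps} optimal.
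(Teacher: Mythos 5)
Your overall architecture is the paper's: reduce via Lemma \ref{lem:tightness of norms is enough} to uniform convergence plus tightness of the variation norms, decompose at regeneration times through Chen's relation (walk iterated sums plus block integrals plus incomplete-block remainders), apply the rough Donsker theorem of Appendix \ref{app:donsker} to the skeleton walk, and get $\Gamma$ from a law of large numbers for the i.i.d.\ block integrals; your use of a random time-change argument in place of the paper's renewal CLT and two-pairs estimate, and your keeping of It{\^o} cross sums plus full block integrals instead of the symmetric/antisymmetric split, are legitimate variants. There is, however, a genuine gap in the second-level tightness. You set $\mathbb{R}^{(n)} \assign \mathbb{X}^{(n)} - \tilde{\mathbb{X}}^{(n)}$, the difference of the two canonical lifts, and claim its $p_0/2$-variation is at most its total variation, which is ``likewise $\lesssim \tfrac1n \sum_k T_k |\Xi_k|^2$''. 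That bound only sees the increments between regeneration points, where indeed $\mathbb{R}^{(n)}$ reduces to the area sums. For a general increment $(s,t)$ one has
\begin{equation*}
\mathbb{R}^{(n)}_{s,t} \;=\; \mathbb{R}^{(n)}_{0,t} - \mathbb{R}^{(n)}_{0,s} - R^{(n)}_{0,s}\otimes X^{(n)}_{s,t} - \tilde X^{(n)}_{0,s}\otimes R^{(n)}_{s,t},
\end{equation*}
and the cross terms are not intra-block quantities: $\tilde X^{(n)}_{0,s}$ is of order one while the $1$-variation of $R^{(n)}$ over node partitions is of order $n^{-1/2}\sum_k T_k \Xi_k$, whose expectation grows like $\sqrt{n}$; similarly $\sum_r |X^{(n)}_{t_{r-1},t_r}|^{p_0/2}$ diverges for $p_0<4$, so pairing it with $\sup|R^{(n)}|=o_P(1)$ does not save the estimate. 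So the step as written fails, and it is exactly the delicate point you flagged yourself. The fix is either the paper's increment-wise comparison, where one subtracts $\tfrac1n \mathbb{Z}_{\kappa(ns),\kappa(nt)}$ (both indices rounded to regeneration times) together with the area sums, so that the residual cross term involves the walk increment over the \emph{same} interval and can be paired against block oscillations, or, in your decomposition, a H{\"o}lder interpolation: bound $\sum_r |R^{(n)}_{t_{r-1},t_r}|^{p_0/2} |X^{(n)}_{t_{r-1},t_r}|^{p_0/2}$ by $\|R^{(n)}\|_{2,[0,T]}^{p_0/2}\, \|X^{(n)}\|_{q,[0,T]}^{p_0/2}$ with $q = 2p_0/(4-p_0) > 2$, using your $2$-variation control of $R^{(n)}$ and the first-level $q$-variation tightness already established. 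Either way an additional argument is required; the claimed block-sum bound alone is not true.

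A minor point: the key renewal theorem is not what makes the tightness estimate work. Since $T_i \geqslant 1$, one has $\kappa(u) \leqslant u$, so the block sums run over at most $nT$ i.i.d.\ terms and $\mathbb{E}\big[\sum_{k < nT} T_k|\Xi_k|^2\big] \lesssim n\, \mathbb{E}[T_1|\Xi_1|^2]$ follows from the i.i.d.\ structure alone; this is how the paper argues, and the key renewal theorem (Appendix A, size-biasing of the block straddling a deterministic time) is not invoked in this step. Attributing the optimality of the moment condition to it in the tightness bound is a misstatement, though not one that affects the logic elsewhere.
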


\begin{remark}
  Note that the positivity condition for the moment in Assumption
  \ref{ass:jumps} is assumed in order to avoid degeneracies. Indeed, it can be
  omitted if we accept a degenerate formulation of the invariance principle.
  More accurately, if this is violated, then $X_n^i = 0$ for all times for
  some coordinate $i$, for which one can say that the invariance principle
  holds with a singular covariance matrix. Note also that Assumption
  \ref{ass:jumps} holds whenever $\tau_1$ has a third moment and $\frac{|
  X_{\ell, k} |}{k - \ell}$ are uniformly bounded from above by a constant
  a.s., for example, whenever the process $X$ has values in $\mathbb{Z}^d$,
  with nearest-neighbor jumps.
\end{remark}

\begin{remark}[Optimality of the result]
  Let $X_n = \sum_{k = 1}^n \xi_k$ be a centered random walk on
  $\mathbb{R}^d$, that is $\mathbb{E} [\xi_k^i] = 0, i = 1, \ldots, d$, where
  $\xi_k = (\xi_k^1, \ldots, \xi_k^d)$ are $\mathbb{R}^d$-valued i.i.d random
  variables. Then $X = (X_n)_{n \in \mathbb{N}_0}$ has, trivially, delayed
  regenerative increments: here $T_k = 1$ and $\Xi_k^i = | \xi_k^i |$, $i = 1,
  \ldots, d,$ $k = 0, 1, \ldots$ Therefore Assumption \ref{ass:jumps} is
  equivalent in this case to $\mathbb{E} \left[ \sum_{i = 1}^d | \xi_1^i |^2
  \right] < \infty$, which is a necessary and sufficient condition for the
  classical central limit theorem, cf. Theorem 4 in Chapter 7 of   {\cite{gnedenko1968limit}}.
  \end{remark}

\section{Examples}

\paragraph{Positive recurrent countable Markov chains.}Assume that $(Y_k)_{k\in\mathbb{N}}$ is a
positive recurrent irreducible Markov chain taking values in some measurable space
$\tmmathbf{S}$, that is
\[ \mathbb{P} (T^+_x < \infty |X_0 = x) = 1 \tmop{for} \tmop{some} x \in
   \tmmathbf{S} \]
where $T^+_x = \inf \{ k \in \mathbb{N} \of Y_k = x \}$. Assume moreover that
$\mathbb{E} [(T^+_x)^3 |X_0 = x] < \infty$. Then the conditions of Theorem
\ref{thm:main} hold under $\mathbb{P} (\cdot |X_0 = x)$ for the sequence
\[ X_n \assign \sum_{k = 0}^n f (Y_k) - n \frac{\mathbb{E} [D|Y_0 =
   x]}{\mathbb{E} [T^+_x |Y_0 = x]}, n \in \mathbb{N}_0, \]
where $D = \sum_{k = 0}^{T^+_x - 1} f (Y_k)$, and $f : \tmmathbf{S}
\rightarrow \mathbb{R}^d$ is any bounded measurable function.
\

All the examples considered in Section 5 of {\cite{LO18}} are applicable
here. In particular, the result applies to {\tmstrong{Random walks in periodic
environment}} ({\cite{LO18}}, Section 5.2), where the periodicity assumption
can be easily relaxed. For example, it can include i.i.d.\ impurities, as long
as the regenerative structure is kept. It applies also to {\tmstrong{Random
walks on covering graphs and hidden Markov chains}}, (Chapter 5.2 of
{\cite{LO18}}). The examples there immediately satisfy Assumption
\ref{ass:jumps}. Moreover, these can be extended, allowing infinite modulating
systems and covering graphs with infinite structure. Another application is to
the so-called {\tmstrong{Ballistic random walks in random environment}}
(Section 5.1 of {\cite{LO18}}). Lastly, for the example of {\tmstrong{Random
walks in Dirichlet environments}}, the rough invariance principle, Theorem 5.5
of {\cite{LO18}} is shown in the ballistic regime (more accurately, whenever a condition which is denoted by
$(T)_{\gamma}$ holds for some $\gamma \in (0, 1)$, see Chapter 6.1 of
{\cite{sabot17}} for teh definition and more details) in the sense of H{\"o}lder rough paths only
whenever the trap parameter $\kappa$ satisfies $\kappa > 8$ in the
$\alpha$-H{\"o}lder rough path topology for all $\alpha < \frac{1}{2} -
\frac{1}{(\kappa / 2)^{\ast}}$, where $(\kappa / 2)^{\ast} = \min \{ \lfloor
\kappa / 2 \rfloor, 2 \lfloor \kappa / 4 \rfloor \}$. The improvement in the
present work is that the rough invariance principle - Theorem \ref{thm:main}
below - applies already whenever the trap parameter satisfies $\kappa > 3$,
and moreover it holds for all $p > 2$, which corresponds to all $\alpha < 1 / 2$
in the $\alpha$-H{\"o}lder settings.

\section{Proof of Theorem \ref{thm:main}}

  Set $Z_k \assign X_{\tau_k} = \sum_{\ell = 0}^{k - 1} Y_\ell$ for $k \ge 0$, where $Y_\ell
  \assign X_{\tau_{\ell }, \tau_{\ell+ 1}}$. Then $Z = (Z_k)_{k \in \mathbb{N}_0} $ is a random walk with square integrable jumps. In particular,
  \begin{equation}\label{eq:IPforZ}
    (Z^{(n)}, \mathbb{Z}^{(n)})  \text{ converges in distribution to } (B^Z, \mathbb{B}^Z) \text{ in } C_p ([0, T], \mathbb{R}^d)
  \end{equation}
  for all $p > 2$, where $B^Z$ is a centered Brownian motion with covariance
  \[ [B^Z, B^Z]_t =\mathbb{E} [Y_2 \otimes Y_2] t \]
  and $\mathbb{B}^Z$ is the Stratonovich iterated integral of $B^Z$.

  By Lemma \ref{lem:tightness of norms is enough} it
  is enough to prove first convergence in distribution in the uniform topology and then to
  show tightness for the sequence of $p$-variation norms.

  We shall work on each level separately. Let us first identify the limit by
  proving the convergence of the finite-dimensional distributions. For ease of
  notation we shall only show the one-dimensional distributions, proving the
  convergence for higher dimensions is done similarly. For any $u \geqslant 0$ let $\kappa (u)$ be the
  unique random integer $k$ so that $\tau_k \leqslant u < \tau_{k + 1}$. Note that
  $\kappa (u)$ is measurable with respect to $\sigma (T_k \of k < u)$, since
  $\tau_k = \sum_{\ell = 0}^{k - 1} T_{\ell}$ . Observe that
  \[ | X^{(n), i}_t - Z^{(n), i}_{\kappa (nt) / n} | = \left|
     \frac{1}{\sqrt{n}} X_{\lfloor nt \rfloor}^i + \frac{nt - \lfloor nt
     \rfloor}{\sqrt{n}}  (X_{\lfloor nt \rfloor + 1}^i - X_{\lfloor nt
     \rfloor}^i) - \frac{1}{\sqrt{n}} X_{\tau_{\kappa (nt)}}^i \right|
     \leqslant \frac{1}{\sqrt{n}} | \Xi^i_{\kappa (nt)} | . \]
  Next, note that as $\tau_{\kappa (nt)} \leqslant nt < \tau_{\kappa (nt) + 1}$
  \[ \frac{\kappa (nt)}{\kappa (nt) + 1} \frac{\kappa (nt) + 1}{\tau_{\kappa
     (nt) + 1}} \leqslant \frac{\kappa (nt)}{nt} \leqslant \frac{\kappa
     (nt)}{\tau_{\kappa (nt)}}. \]
  The weak law of large numbers for $(\tau_k)_{k \in \mathbb{N}_0}$
  implies that
  \begin{equation}\label{eq:renewal-thm}
    \frac{\kappa (nt)}{nt} \xrightarrow[n \rightarrow \infty]{} \mathbb{E}
    [T_1]^{- 1} \backassign \beta
  \end{equation}
  in probability with respect to $\mathbb{P}$, where we used the fact that $1
  \leqslant \mathbb{E} [T_1] < \infty$ by assumption. In particular,
  $\mathbb{P} (\kappa (n) > 2 \beta n) \xrightarrow[n \rightarrow \infty]{} 0$
  and therefore for every $\varepsilon>0$
  \begin{eqnarray*}
    \mathbb{P} (\| X^{(n), i} - Z^{(n), i}_{\kappa (n \cdot) / n}
    \|_{\infty, [0, T]} > \varepsilon) & \leqslant & \mathbb{P} \left( \|
    \Xi^i_{\kappa (n \cdot)} \|_{\infty, [0, T]} > \varepsilon \sqrt{n}
    \right)\\
    & \leqslant & \mathbb{P} \left( \max_{0 \leqslant m \leqslant nT 2 \beta}
    \Xi^i_m > \varepsilon \sqrt{n} \right) + o (1) .
  \end{eqnarray*}
  But since the maximum of order $n$ i.i.d random variables with a finite
  second moment is sub-diffusive in probability the first term also vanishes
  in probability. Indeed,
  \begin{eqnarray*}
    \mathbb{P} \left( \max_{0 \leqslant m \leqslant cn} \Xi^i_m > \varepsilon
    \sqrt{n} \right) & = & 1 - \left( 1 -\mathbb{P} \left( \Xi^i_0 >
    \varepsilon \sqrt{n} \right) \right) \left( 1 -\mathbb{P} \left( \Xi^i_1 >
    \varepsilon \sqrt{n} \right) \right)^{\lfloor cn \rfloor}\\
    & \approx & 1 - \left( 1 -\mathbb{P} \left( \Xi^i_1 > \varepsilon
    \sqrt{n} \right) \right)^{\lfloor cn \rfloor}\\
    & \approx & 1 - \exp \left( - cn\mathbb{P} \left( \Xi^i_1 > \varepsilon
    \sqrt{n} \right) \right),
  \end{eqnarray*}
  which tends to 0 as $n\to\infty$. This holds since $\mathbb{P} (| \Xi^i_1 |^2 > \varepsilon^2 n)
  \leqslant \mathbb{P} (| \Xi^i_1 |^2 > \varepsilon^2 j)$ for every $j
  \leqslant n$ and so
  \[ (n - k) \mathbb{P} (| \Xi^i_1 |^2 > \varepsilon^2 n) \leqslant
     \sum^n_{j = k + 1} \mathbb{P} (| \Xi^i_1 |^2 > \varepsilon^2 j), \quad
     k \leqslant n. \]
  Therefore,
  \[ \limsup_{n \rightarrow \infty} n\mathbb{P} \left( \Xi^i_1 > \varepsilon
     \sqrt{n} \right) = \limsup_{n \rightarrow \infty}  (n - k) \mathbb{P}
     (| \Xi^i_1 |^2 > \varepsilon^2 n) \leqslant \sum^{\infty}_{j = k + 1}
     \mathbb{P} (\varepsilon^{- 2} | \Xi^i_1 |^2 > j) \xrightarrow[k
     \rightarrow \infty]{} 0, \]
  since the right hand side is summable as $\mathbb{E} [| \Xi^i_1 |^2] <
  \infty$.

  Next, since the maximum of linear interpolations of any finite sequence on
  any bounded interval is obttained on the end points, we have
  \begin{eqnarray*}
    \| Z_{\kappa (n \cdot) / n}^{(n), i} - Z_{\cdot \beta}^{(n), i}
    \|_{\infty, [0, T]} & \leqslant & \frac{1}{\sqrt{n}} \max_{0 \leqslant m
    \leqslant T \beta n}  | Z_{\kappa (m / \beta)}^i - Z_m^i |\\
    & \leqslant & \frac{1}{\sqrt{n}} \max_{0 \leqslant m \leqslant Tn \beta}
    | \Xi^i_m | \max_{0 \leqslant m \leqslant Tn \beta} | \kappa (m / \beta) -
    m | .
  \end{eqnarray*}
    Thus for $R>0$
  \begin{align*}
    & \mathbb{P} (\| Z_{\kappa (n \cdot) / n}^{(n), i} - Z_{\cdot \beta}^{(n),
    i} \|_{\infty, [0, T]} > \varepsilon)    \\
    &  \quad \leqslant \mathbb{P} \left( \max_{0 \leqslant m \leqslant T \beta n}
    | Z_{\kappa (m / \beta)}^i - Z_m^i | > \varepsilon \sqrt{n}, \max_{0
    \leqslant m \leqslant Tn \beta} | \kappa (m / \beta) - m | \leqslant R
    \sqrt{n} \right)  \\
    &  \qquad +\mathbb{P} \left( \max_{0 \leqslant m \leqslant T \beta n}  |
    Z_{\kappa (m / \beta)}^i - Z_m^i | > \varepsilon \sqrt{n}, \max_{0
    \leqslant m \leqslant Tn \beta} | \kappa (m / \beta) - m | > R \sqrt{n}
    \right) \\
    & \quad \leqslant \mathbb{P} \left( \max_{0 \leqslant k, m \leqslant T \beta
    n, | k - m | \leqslant R \sqrt{n}}  | Z_k^i - Z^i_m | > \varepsilon
    \sqrt{n} \right) +\mathbb{P} \left( \max_{0 \leqslant m \leqslant Tn
    \beta} | \kappa (m / \beta) - m | > R \sqrt{n} \right)  \\
    & \quad \leqslant 3 T \beta \sqrt{n} \mathbb{P} \left( \max_{k \leqslant
    R \sqrt{n}}  | Z_k^i | > \varepsilon \sqrt{n} \right) +\mathbb{P}
    \left( \max_{0 \leqslant m \leqslant Tn \beta} | \kappa (m / \beta) - m |
    > R \sqrt{n} \right) .
  \end{align*}
  To deal with the first term we use a standard two pairs estimate (see
  Example 10.1 of Billingsley {\cite{Bil99}}), to find some $K > 0$ and $\Psi
  (\lambda) \xrightarrow[\lambda \rightarrow \infty]{} 0$ so that for any fixed $R>0$
  \begin{eqnarray}
    \sqrt{n} \mathbb{P} \left( \max_{k \leqslant R \sqrt{n}}  | Z_k^i | >
    \varepsilon \sqrt{n} \right) & \leqslant & \sqrt{n} \frac{K}{\left(
    \varepsilon n^{1 / 4} / \sqrt{R} \right)^4} + \sqrt{n}  \frac{\Psi
    \left( \frac{\varepsilon \sqrt{n}}{\sqrt{R}} \right)}{\left( \varepsilon
    n^{1 / 4} / \left( 2 \sqrt{R} \right) \right)^2}
    \label{eq:two:pairs:estimate} \\
    & = & \frac{KR^2}{\varepsilon^4 \sqrt{n}} + \frac{4 R}{\varepsilon^2}
    \Psi \left( \frac{\varepsilon \sqrt{n}}{\sqrt{R}} \right) \xrightarrow[n
    \rightarrow \infty]{} 0. \nonumber
  \end{eqnarray}
  By the central limit theorem for renewal processes with
  finite variance
  \[ \mathbb{P} \left( \max_{0 \leqslant m \leqslant Tn \beta} | \kappa (m /
     \beta) - m | > R \sqrt{n} \right) \xrightarrow[n \rightarrow \infty]{}
     \mathbb{P} (\max_{0 \leqslant t \leqslant T} c | W_t | > R),
     \label{eq:clt-renewal} \]
  where $W$ is a Brownian motion and $c > 0$ is some constant. As $R$ can be
  chosen arbitrarily large, $\limsup_{n \rightarrow \infty} \mathbb{P} (\|
  Z_{\kappa (n \cdot) / n}^{(n), i} - Z_{\cdot \beta}^{(n), i} \|_{\infty, [0,
  T]} > \varepsilon) = 0$. Therefore,
  \[ \| X^{(n), i} - Z^{(n), i}_{\cdot \beta} \|_{\infty, [0, T]}
     \xrightarrow[n \rightarrow \infty]{} 0 \qquad \tmop{in}
     \tmop{probability} . \]
  \label{eq:ZisClose}Applying Slutsky's Theorem in the Skorohod topology,
  Theorem \ref{thm:slutsky-skorohod}, to
  \begin{equation}
    X^{(n)} = X^{(n)} - Z_{\beta \cdot}^{(n)} + Z_{\beta \cdot}^{(n)}
  \end{equation}
  we deduce that $X^{(n)} \xrightarrow[n \rightarrow \infty]{} B_{\beta
  \cdot}^Z \backassign B$ in distribution with respect to $\mathbb{P}$, so
  that $B$ is a $d$-dimensional Brownian motion with a covariance matrix
  given in \eqref{eq:cov_B}, as desired.

  Next, we shall show tightness of the $p$-variation norms for $p > 2$. Note
  that $\kappa (u) \leqslant u$ for any $u \geqslant 0$. Indeed, $T_i\in \mathbb{N}$ for all $i \in \mathbb{N}$ by assumption and therefore $\kappa
  (u) \leqslant \tau_{\kappa (u)} \leqslant u$. Now, since
  \[ | X^{(n), i}_{s, t} - Z^{(n), i}_{\kappa (ns) / n, \kappa (nt) / n} |
     \leqslant \frac{1}{\sqrt{n}} (| \Xi^i_{\kappa (ns)} | + | \Xi^i_{\kappa
     (nt) + 1} |), \]
 for any partition $0 = t_0 < t_1 < \cdots < t_m = T$, we have
  \[ \sum_{r = 1}^m | X_{nt_{r - 1}, nt_r}^i - Z^i_{\kappa (nt_{r - 1}),
     \kappa (nt_r)} |^2 \lesssim \sum_{0 \leqslant k < nT} \# \{ 1 \leqslant r
     \leqslant m \of \kappa (nt_r) = k \}  | \Xi^i_k |^2 \leqslant \sum_{0
     \leqslant k < nT} T_k  | \Xi^i_k |^2 . \]
  Hence,
  \[ \mathbb{E} [\| X^{(n)} - Z^{(n)}_{\kappa (n \cdummy) / n} \|_{2, [0,
     T]}^2] \lesssim \frac{1}{n}  ((n - 1) \mathbb{E} [T_1  | \Xi_1 |^2]
     +\mathbb{E} [T_0  | \Xi_0 |^2]) \lesssim 1. \]
  Next, since $\kappa (nT) \leqslant nT$ we have that $\left(
  \frac{1}{\sqrt{n}} Z^i_{\kappa (\ell)} \right)_{\ell \leqslant nT}$ is a
  subsequence of $\left( \frac{1}{\sqrt{n}} Z^i_k \right)_{k \leqslant nT}$
  and therefore $\mathbb{E} [\| Z^{(n), i}_{\kappa (n \cdummy) / n} \|^2_{p,
  [0, T]}] \leqslant \mathbb{E} [\| Z^{(n), i} \|_{p, [0, T]}^2] \lesssim 1$,
  see \eqref{eq:tightness p-var first level}.

  Using the triangle inequality together with the fact that the $p$-variation
  norms are monotonically decreasing in $p$,
  \[ \mathbb{E} [\| X^{(n)} \|^2_{p, [0, T]}] \lesssim \mathbb{E} [\| X^{(n)}
     - Z^{(n), i}_{\kappa (n \cdummy) / n} \|_{2, [0, T]}^2] +\mathbb{E} [\|
     Z^{(n), i}_{\kappa (n \cdummy) / n} \|^2_{p, [0, T]}] \lesssim 1, \]
  as desired.

  We shall now treat the second level. Let us show first convergence in the
  Skorohod topology. Note that for $\ell < k$ we have the following
  decomposition which is a consequence of Chen's relation {\eqref{eq:Chens}}
  \begin{equation}\label{eq:chens for I}
  I^{\mathrm{Str}}_{\tau_{\ell}, \tau_k} (X) =
    I^{\mathrm{Str}}_{\ell, k}  (Z) + \sum_{\ell + 1 \leqslant u \leqslant k} A_{\tau_{u -
    1}, \tau_u}  (X) .
  \end{equation}
  Indeed, $I^{\mathrm{Str}}_{\tau_{u - 1}, \tau_u} (X) = \mathrm{sym}
  (I^{\mathrm{Str}}_{\tau_{u - 1}, \tau_u} (X)) + A_{\tau_{u - 1}, \tau_u}
  (X)$, and by a direct computation $\mathrm{sym} (I^{\mathrm{Str}}_{\tau_{u -
  1}, \tau_u} (X)) = \frac{1}{2} Z_{u - 1, u} \otimes Z_{u - 1, u}$.
  Therefore,

  \begin{align*}
    I^{\mathrm{Str}}_{\tau_{\ell}, \tau_k} (X) & = I^{\mathrm{Str}}_{\ell, k}
    (Z) + \sum_{\ell + 1 \leqslant u \leqslant k} I^{\mathrm{Str}}_{\tau_{u - 1}, \tau_u}
    (X) - \frac{1}{2}  \sum_{\ell + 1 \leqslant u \leqslant k} Z_{u - 1, u} \otimes Z_{u -
    1, u}\\
    & = I^{\mathrm{Str}}_{\ell, k} (Z) + \sum_{\ell + 1 \leqslant u \leqslant k}
    A_{\tau_{u - 1}, \tau_u} (X) .
  \end{align*}

  By Remark 2.2 of {\cite{DOP}}, it is enough to prove the convergence of
  $\mathbb{X}_t^{(n)} \assign \mathbb{X}_{0, t}^{(n)}$. By {\eqref{eq:chens for I}},
  we have for any $t \geqslant 0$
  \begin{eqnarray*}
    \frac{1}{n} \left| \mathbb{X}_{nt}^{(1)} -\mathbb{Z}_{\kappa (nt)} -
    \sum_{1 \leqslant u \leqslant \kappa (nt)} A_{\tau_{u - 1}, \tau_u} (X) \right| & = &
    \frac{1}{n}  | \mathbb{X}_{\tau_{\kappa (nt)}, nt}^{(1)} + X_{0,
    \tau_{\kappa (nt)}} \otimes X_{\tau_{\kappa (nt)}, nt} |\\
    & \leqslant & \frac{1}{n}  | \Xi_{\kappa (nt)}^{\otimes 2} | +
    \frac{1}{n}  | Z_{\kappa (nt)} | \otimes | \Xi_{\kappa (nt)} | .
  \end{eqnarray*}
  Therefore
  \begin{eqnarray*}
    \sup_{0 \leqslant s, t \leqslant T} \frac{1}{n} \left| \mathbb{X}_{ns,
    nt}^{(1)} -\mathbb{Z}_{\kappa (ns), \kappa (nt)} - \sum_{\kappa (ns) < u
    \leqslant \kappa (nt)} A_{\tau_{u - 1}, \tau_u} (X) \right| &  & \\
    \leqslant \quad 2 \sup_{0 \leqslant t \leqslant T} \frac{1}{n}  (|
    \Xi_{\kappa (nt)}^{\otimes 2} | + | Z_{\kappa (nt)} | \otimes |
    \Xi_{\kappa (nt)} |) . &  &
  \end{eqnarray*}
  But $\sup_{0 \leqslant t \leqslant T} \frac{1}{n}  (| \Xi_{\kappa
  (nt)}^{\otimes 2} | + | Z_{\kappa (nt)} | \otimes | \Xi_{\kappa (nt)} |)
  \xrightarrow[n \rightarrow 0]{} 0$ in probability. Indeed, we have already
  seen that $\mathbb{P} \left( \| \Xi^i_{\kappa (n \cdot)} \|_{\infty, [0, T]}
  > \varepsilon \sqrt{n} \right) \rightarrow 0$ for any $\varepsilon > 0$ and
  for the second term
  \begin{eqnarray*}
    \mathbb{P} (\sup_{0 \leqslant t \leqslant T} | Z_{\kappa (nt)} \nobracket
    \otimes \nobracket \Xi_{\kappa (nt)} | > \varepsilon n) & \leqslant &
    \mathbb{P} \left( \| Z_{n \cdot} \|_{\infty, [0, T]} > R \sqrt{n} \right)
    +\mathbb{P} \left( \max_{0 \leqslant k \leqslant nT} | \Xi_k | >
    \varepsilon \sqrt{n} / R \right) .
  \end{eqnarray*}
  But for any $R > 0$ the right term vanishes as $n \rightarrow \infty$, while
  the left term converges to $\mathbb{P} (\max_{0 \leqslant t \leqslant T} |
  cW_t | > R)$, where $W$ is a Brownian motion and $c > 0$ is a constant,
  which vanishes as $R \rightarrow \infty$.
  By Slutsky's Theorem the convergence of $\mathbb{X}^{(n)}$ in
  distribution holds if
  \[ \frac{1}{n} \mathbb{Z}_{\kappa (n \cdot)} + \frac{1}{n}  \sum_{1 \leqslant u
     \leqslant \kappa (n \cdot)} A_{\tau_{u - 1}, \tau_u}  (X) \xrightarrow[n
     \rightarrow \infty]{} \mathbb{B}_{0, \cdot} + \Gamma \cdot \]
  in distribution in the uniform topology. To achieve the last convergence,
  first note that $(A_{\tau_{u - 1}, \tau_u}  (X))_{u \in \mathbb{N}}$ are
  independent random matrices with the same law for $u \geqslant 2$. Note also
  that $| A_{\tau_{u - 1}, \tau_u}  (X) | \leqslant 4 | \Xi_{u - 1} |^{\otimes
  2} T_{u - 1}$ which implies that $\mathbb{E} [| A_{\tau_{u - 1}, \tau_u}
  (X) |] < C$ for $u = 1, 2$ by Assumption \ref{ass:jumps}. Hence the weak law
  of large numbers for the sum yields
  \[ \frac{1}{n}  \sum_{1 \leqslant k \leqslant n} A_{\tau_{k - 1}, \tau_k}  (X)
     \xrightarrow[n \rightarrow \infty]{} \mathbb{E} [A_{\tau_1, \tau_2}  (X)]
  \]
  in probability. Together with the convergence in probability $\frac{\kappa
  (nt)}{nt} \xrightarrow[n \rightarrow \infty]{} \mathbb{E} [T_1]^{- 1}$ which
  is the consequence of Assumption \ref{ass:jumps}. with $\alpha = 0$, we
  deduce that
  \[ \frac{1}{n}  \sum_{1 \leqslant k \leqslant \kappa (nt)} A_{\tau_{k - 1}, \tau_k}
     (X) \xrightarrow[n \rightarrow \infty]{} \frac{\mathbb{E} [A_{\tau_1,
     \tau_2}  (X)]}{\mathbb{E} [T_1]} \backassign \Gamma \quad \tmop{almost}
     \tmop{surely} . \]
  Moreover, since $\frac{\mathbb{E} [| A_{\tau_1, \tau_2}  (X) |]}{\mathbb{E}
  [T_1]} < \infty$ we have moreover
  \[ \left\|| \frac{1}{n}  \sum_{1 \leqslant k \leqslant \kappa (n \cdot)} A_{\tau_{k -
     1}, \tau_k}  (X) - \Gamma \cdot \right\||_{\infty, [0, T]} \xrightarrow[n
     \rightarrow 0]{} 0 \quad \tmop{in} \tmop{probability} . \]
  Using Slutsky's Theorem again together with {\eqref{eq:IPforZ}} it is left
  to show that
  \[ \| \mathbb{Z}_{\kappa (n \cdot) / n}^{(n)} -\mathbb{Z}_{\cdot
     \beta}^{(n)} \|_{\infty, [0, T]} \xrightarrow[n \rightarrow \infty]{}
     0 \text{ in probability} . \]
  As for the case of the first level we use {\eqref{eq:clt-renewal}} to reduce
  the last convergence to showing that $\| \mathbb{Z}_{\kappa (n \cdot) /
  n}^{(n)} -\mathbb{Z}_{\cdot \beta}^{(n)} \|_{\infty, [0, T]} 1_{\left\{
  \max_{0 \leqslant m \leqslant Tn} | \kappa (m) - m \beta | \leqslant R
  \sqrt{n} \right\}} \xrightarrow[n \rightarrow \infty]{} 0 \text{ in
  probability}$. Fix $\varepsilon > 0$.
  \begin{eqnarray*}
    \mathbb{P} \left( \max_{0 \leqslant k, m \leqslant T \beta n, | k - m |
    \leqslant R \sqrt{n}}  | \mathbb{Z}_{0, m}^{i, j} -\mathbb{Z}_{0,
    k}^{i, j} | > \varepsilon n \right) & \\
    \lesssim \quad \quad \mathbb{P} \left( \max_{0 \leqslant k, m \leqslant T
    \beta n, | k - m | \leqslant R \sqrt{n}}  | Z^i_{k, m} | \right. & |
    Z^j_{0, k} | > \varepsilon n \nobracket)\\
    \leqslant \quad \quad \mathbb{P} \left( \max_{0 \leqslant k, m \leqslant T
    \beta n, | k - m | \leqslant R \sqrt{n}}  | Z^i_{k, m} | \right. &
    \left. > \sqrt{\varepsilon n} / R \right) +\mathbb{P} \left( \max_{0
    \leqslant k \leqslant nT} | Z^j_{0, k} | > \sqrt{\varepsilon n} R
    \right)\\
    \leqslant \quad  \frac{\sqrt{n}}{R} \mathbb{P} \left( \max_{0 \leqslant k
    \leqslant R \sqrt{n}}  | Z^i_{0, k} | > \sqrt{\varepsilon n} / R
    \right)  & +\mathbb{P} \left( \max_{0 \leqslant k \leqslant nT} | Z^j_{0,
    k} | > \sqrt{\varepsilon n} R \right) .
  \end{eqnarray*}
  Now, for any fixed $R$ the first term converges to zero by
  {\eqref{eq:two:pairs:estimate}} while the limsup of the right term is
  bounded by $\mathbb{P} \left( \max_{0 \leqslant t \leqslant T} | cW_t | >
  \sqrt{\varepsilon} R \right)$, which tends to zero as $R \rightarrow
  \infty$. Therefore we have proved the convergence is the uniform topology.

  To end, we shall now prove tightness of the $p / 2$-variation norms, $p >
  2$. As in the estimate for the first level, we observe that for $0 = t_0 <
  t_1 < \cdots < t_m = T$ and coordinates $1 \leqslant i, j \leqslant d$ we
  have
  \begin{eqnarray*}
    \sum_{r = 1}^m | A^{i, j}_{nt_{r - 1}, nt_r} | & = &  \sum_{0 \leqslant u
    < nT} \sum_{0 \leqslant r \leqslant m \of \kappa (nt_r) = u} | A^{i,
    j}_{nt_{r - 1}, nt_r} |\\
    & \lesssim &  \sum_{0 \leqslant u < nT} \# \{ 1 \leqslant r \leqslant m
    \of \kappa (nt_r) = u \}  | \Xi^i_u \Xi^j_u |\\
    & \leqslant &  \sum_{0 \leqslant u < nT} T_u  | \Xi^i_u \Xi^j_u |,
  \end{eqnarray*}
  which implies that $\mathbb{E} \left[ \frac{1}{n}  \left\|| \left(
  \sum_{\kappa (ns) \leqslant u \leqslant \kappa (nt)} A_{\tau_{u - 1}, \tau_u} (X)
  \right)_{0 \leqslant s < t \leqslant T} \right\||_{1, [0, T]} \right]
  \lesssim 1$. Also,
  \[ \begin{array}{lll}
       \left| \mathbb{X}_{ns, nt}^{(1)} -\mathbb{Z}_{\kappa (ns), \kappa (nt)}
       - \sum_{\kappa (ns) \leqslant u \leqslant \kappa (nt)} A_{\tau_{u - 1}, \tau_u} (X)
       \right| & \leqslant &  | \Xi_{\kappa (ns), \kappa (nt)}^{\otimes 2} | +
       | Z_{\kappa (ns), \kappa (nt)} | \otimes | \Xi_{\kappa (ns), \kappa
       (nt)} |
     \end{array}, \]
  and so
  \[
  \mathbb{E} \left[ \frac{1}{n}  \left\|| \left( \mathbb{X}_{ns,
  nt}^{(1)} -\mathbb{Z}_{\kappa (ns), \kappa (nt)} - \sum_{\kappa (ns) \leqslant u
  \leqslant \kappa (nt)} A_{\tau_{u - 1}, \tau_u} (X) \right)_{0 \leqslant s < t
  \leqslant T} \right\||_{1, [0, T]} \right] \lesssim 1.
  \]
  Observe that
  $\mathbb{E} [\| \mathbb{Z}^{(n)} \|_{p / 2, [0, T]}] \lesssim 1$, see
  {\eqref{eq:tightness p/2-var}}. Using the triangle inequality together with
  the fact that the $p$-variation norms are monotonically decreasing
  $\mathbb{E} [\| \mathbb{X}^{(n)} \|_{p / 2, [0, T]}] \lesssim 1$, as
  desired.
$\Box$

\ack
  The author is grateful to Tommaso Cornelis Rosati and Willem van Zuijlen for
  numerous valuable discussions. This work was supported by the German
  Research Foundation (DFG) via Research Unit FOR2402.

\appendix\section{Key renewal theorem}

In this section we show that the moment condition of Assumption
\ref{ass:jumps} is asymptotically equivalent to a second moment condition on
$\Xi_{\kappa (n)}$.

\begin{theorem}[Key renewal theorem]
  Assume that $p_j \geqslant 0, j \in \mathbb{N}_0, p_0 = 0, \sum_{j \in \mathbb{N}_0}
  p_j = 1$ so that $\mathrm{gcd} \{ j : p_j > 0 \} = d \in \mathbb{N}$. If
  $(b_n)_{n \in \mathbb{N}_0}$ is a summable sequence of non-negative real numbers,
  then the equation
  \begin{equation}
    a_n = \sum_{m = 0}^n b_{n - m} u_m
  \end{equation}
  \label{eq:renewal}has a unique solution satisfying
  \begin{equation}\label{eq:key renewal limit}
    \lim_{n \rightarrow \infty} \suppressed{\explicitspace} a_n =
    \frac{\sum_{m \in \mathbb{N}_0} b_m}{\sum_{j \in \mathbb{N}_0} jp_j},
  \end{equation}

  where $u_m = \sum_{k \in \mathbb{N}_0} p^{\ast k} (m)$ is the $k$-fold
  convolution of p evaluated at $m$, that is $p^{\ast k} (m) =\mathbb{P}
  \left( \sum_{j = 1}^k T_j = m \right)$, where $(T_k)_{k \geqslant 2}$ is a
  sequence of independent random variables so that $T_k, k \geqslant 2$ all
  have the same probability mass function $p$. Moreover, This to be understood
  even when $\sum_{j \in \mathbb{N}_0} jp_j = \infty$, in which case the limit on
  the right side of {\eqref{eq:key renewal limit}} is $0$.

  (To note about delay: $T_1$ has a different distribution.)
\end{theorem}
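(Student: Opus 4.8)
The plan is to reduce the statement to the classical discrete renewal theorem (Blackwell / Erd\H{o}s--Feller--Pollard) in the aperiodic case, to prove that core limit by a coupling argument, and to recover the displayed limit by dominated convergence; existence and uniqueness are elementary. Throughout set $\mu\assign\sum_j jp_j\in[1,\infty]$, let $(\xi_k)_{k\ge1}$ be i.i.d.\ with law $p$, and $S_0\assign0$, $S_k\assign\xi_1+\cdots+\xi_k$, so that $u_m=\sum_{k\ge0}\mathbb{P}(S_k=m)$. For existence and uniqueness, observe that since $p^{\ast0}=\delta_0$ one has $u=\delta_0+p\ast u$, hence $a\assign b\ast u$ satisfies $a=b+p\ast a$, i.e.\ $a_n=b_n+\sum_{k=1}^n p_ka_{n-k}$ (using $p_0=0$); conversely this recursion determines $a$ from $b$ and $p$ inductively in $n$ (starting from $a_0=b_0$), so $b\ast u$ is the unique solution, which is the meaning of the uniqueness assertion.

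Next I would record two elementary facts about $u$. Because $p_0=0$ the walk $0=S_0<S_1<S_2<\cdots$ is strictly increasing, so each level $m$ is visited at most once and therefore $u_m=\mathbb{P}(m\in\{S_k:k\ge0\})\in[0,1]$; moreover every $\xi_k$ lies in $d\mathbb{N}$ (as $d\mid j$ whenever $p_j>0$), so $u_m=0$ unless $d\mid m$. Replacing $(\xi_k)$ by $(\xi_k/d)$ makes the walk aperiodic, scales $\mu$ to $\mu/d$, and identifies $u_{nd}$ with the renewal sequence of the rescaled walk; hence it suffices to establish the limit in the aperiodic case $d=1$, that is Blackwell's theorem $u_n\to1/\mu$ (read as $0$ when $\mu=\infty$).

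The heart of the matter is this limit. When $\mu<\infty$ I would build on one probability space the pure renewal process with epoch set $R=\{S_k:k\ge0\}$, for which $\mathbb{P}(n\in R)=u_n$, together with an independent stationary renewal process with epoch set $R'$, for which $\mathbb{P}(n\in R')=1/\mu$ for every $n$ (its delay being the stationary age, $\mathbb{P}(\mathrm{age}=j)=\mu^{-1}\mathbb{P}(\xi_1>j)$), and couple them to coincide from their first common epoch onward; since that epoch is a.s.\ finite, $|u_n-1/\mu|\le\mathbb{P}(R\text{ and }R'\text{ share no common epoch in }[0,n))\to0$. The step on which everything rests is that two such independent aperiodic renewal processes almost surely share a common epoch --- equivalently, that the bivariate age chain on $\mathbb{N}_0^2$ reaches $(0,0)$ almost surely --- and this is precisely where aperiodicity ($\mathrm{gcd}\{j:p_j>0\}=d$, reduced to $d=1$) is used; alternatively Blackwell's theorem may be quoted from Feller, or reproved by the Erd\H{o}s--Feller--Pollard elementary argument, which rests on the same input. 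The case $\mu=\infty$ is handled separately, by showing $u_n\to0$ (e.g.\ from $N^{-1}\sum_{n\le N}u_n\to\mu^{-1}=0$ together with a regularity estimate excluding oscillation, or again by citation).

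Finally, with $0\le u_m\le1$ from the second step and $\sum_m b_m<\infty$, dominated convergence applied to $a_n=\sum_{m=0}^n b_mu_{n-m}$, using $u_{n-m}\to1/\mu$ for each fixed $m$ along the relevant residue class mod $d$, yields $a_n\to\mu^{-1}\sum_m b_m$, which is {\eqref{eq:key renewal limit}}; the differing law of $T_1$ flagged in the statement plays no role, since both the equation and $u$ involve only $p$. The main obstacle is the coupling step giving $u_n\to1/\mu$ --- everything around it is bookkeeping.
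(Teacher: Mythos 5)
The paper offers no proof of this statement: it is the classical discrete key renewal theorem (Erd\H{o}s--Feller--Pollard/Blackwell, as in Feller's book), recorded in the appendix only so that Lemma \ref{lem:limiting moment} can invoke it. So there is no internal argument to compare against; what you propose is the standard textbook proof, and its skeleton is sound --- uniqueness via the equivalent renewal equation $a=b+p\ast a$ solved inductively (using $p_0=0$), the bound $0\leqslant u_m\leqslant 1$ from strict monotonicity of the walk, reduction to the aperiodic case by rescaling by $d$, the limit $u_n\to 1/\mu$ via coupling with the stationary renewal process, and dominated convergence at the end. Be aware, though, that the one genuinely non-trivial ingredient (that two independent aperiodic renewal processes a.s.\ share a common epoch, equivalently Blackwell's theorem, together with the $\mu=\infty$ case) is exactly the part you leave to citation or to a one-line sketch, so in substance your write-up, like the paper, ultimately rests on quoting the classical renewal limit; that is acceptable here, but it should be stated as such rather than presented as a complete argument.

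The one concrete defect is your endgame when $d>1$. After your own reduction, $u_m$ vanishes off $d\mathbb{N}$ and $u_{md}\to d/\mu$ (not $1/\mu$), so for fixed $m$ the sequence $u_{n-m}$ does not converge as $n\to\infty$; along the subsequence $n\equiv r \pmod{d}$ dominated convergence gives $a_n\to \frac{d}{\mu}\sum_{m\equiv r \pmod{d}} b_m$, which in general depends on $r$. Hence the asserted full-sequence limit $\mu^{-1}\sum_m b_m$ follows only for $d=1$ (or under extra structure on $b$ making the residue-class sums equal), and the phrase ``along the relevant residue class mod $d$'' glosses over this rather than resolving it. In fairness, the theorem as displayed in the paper carries the same blemish --- the stated limit is the aperiodic form --- but a correct proof should either restrict to $d=1$, or state the conclusion along residue classes, or make explicit the hypothesis under which the sub-limits coincide.
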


\begin{lemma}
  \label{lem:limiting moment}Let $\Xi_k$ as defined in Assumption
  \ref{ass:jumps}, then for $r, \ell \in \mathbb{N}_0$
  \[ \mathbb{E} [| \Xi_{\kappa (n)} |^{\otimes r} T_{\kappa (n)}^{\ell}]
     \xrightarrow[n \rightarrow \infty]{} \frac{\mathbb{E} [| \Xi_2 |^{\otimes
     r} T_2^{\ell + 1}]}{\mathbb{E} [T_2]}, \]
  whenever the right hand side is finite.
\end{lemma}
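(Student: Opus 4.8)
The plan is to evaluate $\mathbb{E}[\,|\Xi_{\kappa(n)}|^{\otimes r}\,T_{\kappa(n)}^{\ell}\,]$ by splitting according to the value of $\kappa(n)$ and then recognising the result as the solution of a discrete renewal equation, to which the Key renewal theorem above can be applied entrywise (all the tensors that occur have non-negative entries, so every interchange of sum and expectation below is legitimate by Tonelli). Since $\kappa(n)=k$ exactly when $\tau_k\le n<\tau_{k+1}=\tau_k+T_k$, I would write
\[
  \mathbb{E}[\,|\Xi_{\kappa(n)}|^{\otimes r}\,T_{\kappa(n)}^{\ell}\,] = \mathbb{E}\big[\,|\Xi_0|^{\otimes r}\,T_0^{\ell}\,1_{\{T_0>n\}}\,\big] + \sum_{k\ge 1}\mathbb{E}\big[\,|\Xi_k|^{\otimes r}\,T_k^{\ell}\,1_{\{\tau_k\le n,\ T_k>n-\tau_k\}}\,\big],
\]
using $\{\kappa(n)=0\}=\{T_0>n\}$ for the delay block. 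The delay term equals $\mathbb{E}[\,|\Xi_0|^{\otimes r}T_0^{\ell}1_{\{T_0>n\}}\,]$, which tends to $0$ as $n\to\infty$ by dominated convergence: $T_0<\infty$ a.s., and $\mathbb{E}[\,|\Xi_0|^{\otimes r}T_0^{\ell}\,]<\infty$ — the latter holding under Assumption~\ref{ass:jumps} for $r\le 2$, $\ell\le 1$ via Cauchy--Schwarz, and assumed, or checked by hand, otherwise.

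For the sum over $k\ge 1$ I would use that the $k$-th regeneration block, hence $(\Xi_k,T_k)$, is independent of $\tau_k=T_0+\dots+T_{k-1}$ and is distributed as the second block. Conditioning on the value of $\tau_k$ turns the sum into $\sum_{m=0}^{n}h_{n-m}\,\tilde u_m$, where
\[
  h_j\assign\mathbb{E}\big[\,|\Xi_2|^{\otimes r}\,T_2^{\ell}\,1_{\{T_2>j\}}\,\big], \qquad \tilde u_m\assign\sum_{k\ge 1}\mathbb{P}(\tau_k=m).
\]
The sequence $(h_j)_{j\ge 0}$ is non-negative, and because $T_2\in\mathbb{N}$,
\[
  \sum_{j\ge 0}h_j=\mathbb{E}\Big[\,|\Xi_2|^{\otimes r}\,T_2^{\ell}\sum_{j\ge 0}1_{\{T_2>j\}}\,\Big]=\mathbb{E}[\,|\Xi_2|^{\otimes r}\,T_2^{\ell+1}\,],
\]
so $(h_j)$ is summable precisely under the finiteness hypothesis of Lemma~\ref{lem:limiting moment}. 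This identity is what makes the renewal input applicable, and it already produces the constant appearing in the statement.

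To conclude, $\tilde u$ is the renewal mass function of the delayed renewal process $(\tau_k)_{k\ge 1}$ with delay $T_0$ and i.i.d.\ increments $T_1,T_2,\dots$ of mass function $p$; with $u_m=\sum_{i\ge 0}p^{\ast i}(m)$ the undelayed renewal function, $\tilde u_m=\sum_{j\ge 0}\mathbb{P}(T_0=j)u_{m-j}$, hence $\sum_{m=0}^{n}h_{n-m}\tilde u_m=\sum_{j\ge 0}\mathbb{P}(T_0=j)\,a_{n-j}$ with $a_i\assign\sum_{m=0}^{i}h_{i-m}u_m$. The Key renewal theorem (the limit \eqref{eq:key renewal limit}), whose summability hypothesis is exactly what was just checked, then gives $a_i\to\mathbb{E}[\,|\Xi_2|^{\otimes r}T_2^{\ell+1}\,]/\mathbb{E}[T_2]$ as $i\to\infty$; since $(a_i)$ is thereby bounded and $T_0<\infty$ a.s., one last dominated-convergence step over $j$ passes this limit to $\sum_{j\ge 0}\mathbb{P}(T_0=j)a_{n-j}$. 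Together with the vanishing delay term this is the assertion; alternatively, invoking a delayed version of the Key renewal theorem collapses this last reduction.

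I expect the only genuine input to be the Key renewal theorem; the remaining bookkeeping (conditioning, independence, Tonelli, two rounds of dominated convergence) is routine. Two points deserve care. First, the delay block surfaces twice — as the boundary event $\{\kappa(n)=0\}$ and as the extra convolution $\mathbb{P}(T_0=\cdot)\ast u$ inside $\tilde u$ — and both occurrences must be absorbed using only $T_0<\infty$ a.s.\ and integrability of $|\Xi_0|^{\otimes r}T_0^{\ell}$. Second, when $d>1$ the sequence $(h_j)$ depends on $j$ only through $\lfloor j/d\rfloor$, because $T_2$ is $d$-arithmetic; this is precisely the structural feature that forces $\lim_i a_i$ to equal $\sum_j h_j/\mathbb{E}[T_2]$ rather than a quantity depending on $i\bmod d$, so it is worth confirming that the Key renewal theorem is being applied in a form compatible with the arithmetic case.
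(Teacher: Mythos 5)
Your proof is correct and follows essentially the same route as the paper: both identify $\mathbb{E}[|\Xi_{\kappa(n)}|^{\otimes r}T_{\kappa(n)}^{\ell}]$ (modulo the delay block) as the solution $a_n=\sum_{m=0}^n b_{n-m}u_m$ of the renewal equation with $b_n=\mathbb{E}[|\Xi_2|^{\otimes r}T_2^{\ell}1_{T_2>n}]$, observe that $\sum_n b_n=\mathbb{E}[|\Xi_2|^{\otimes r}T_2^{\ell+1}]$, and conclude via the Key renewal theorem. The only difference is that you handle the delay explicitly (splitting off $\{\kappa(n)=0\}$ and writing the delayed renewal mass function as the convolution of the law of $T_0$ with the undelayed $u$, then passing to the limit by dominated convergence), whereas the paper's computation treats the block $k=0$ as generic and identifies $\sum_{k}\mathbb{P}(\tau_k=m)$ with $u_m$ without comment; your version is the more careful one, at the cost only of the mild integrability of $|\Xi_0|^{\otimes r}T_0^{\ell}$ that you note, and your remark about the $d$-arithmetic case matching the form of the Key renewal theorem is likewise a point the paper leaves implicit.
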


\begin{proof}
  Let $b_n =\mathbb{E} [| \Xi_2 |^{\otimes r} T_2^{\ell} 1_{T_2 > n}]$, then
  \[ \sum_{n \in \mathbb{N}_0} b_n = \sum_{n \in \mathbb{N}_0} \sum_{k > n} \mathbb{E}
     [| \Xi_2 |^{\otimes r} T_2^{\ell} 1_{T_2 = k}] = \sum_{k \in \mathbb{N}}
     k\mathbb{E} [| \Xi_2 |^{\otimes r} k^{\ell} 1_{T_2 = k}] =\mathbb{E} [|
     \Xi_2 |^{\otimes r} T_2^{\ell + 1}] . \]
  By The key renewal theorem there is a unique solution $(a_n)_{n \in \mathbb{N}_0}$
  to the equation {\eqref{eq:renewal}}, and it satisfies the limit in
  {\eqref{eq:key renewal limit}}. By the last computation the right hand side
  of {\eqref{eq:key renewal limit}} is exactly the one in the wanted
  assertion. It is therefore enough to show that $a_n =\mathbb{E} [|
  \Xi_{\kappa (n)} |^{\otimes r} T_{\kappa (n)}^{\ell}]$. Indeed,
  \begin{align*}
    &\mathbb{E} [| \Xi_{\kappa (n)} |^{\otimes r} T_{\kappa (n)}^{\ell}] =
    \sum_{k \in \mathbb{N}_0} \mathbb{E} [| \Xi_k |^{\otimes r} T_k^{\ell}
    1_{\kappa (n) = k}] = \sum_{k \in \mathbb{N}_0} \mathbb{E} [| \Xi_k |^{\otimes
    r} T_k^{\ell} 1_{\tau_k \leqslant n, \tau_k + T_k > n}]  \\
    &\quad = \sum_{k \in \mathbb{N}_0} \sum_{m = 0}^n \mathbb{E} [| \Xi_k |^{\otimes
    r} T_k^{\ell} 1_{T_k > n - m} 1_{\tau_k = m}] = \sum_{k \in \mathbb{N}_0}
    \sum_{m = 0}^n \mathbb{E} [| \Xi_k |^{\otimes r} T_k^{\ell} 1_{T_k > n -
    m}] \mathbb{P} [\tau_k = m]  \\
    &\quad = \sum_{m = 0}^n \mathbb{E} [| \Xi_2 |^{\otimes r} T_2^{\ell} 1_{T_2
    > n - m}]  \sum_{k \in \mathbb{N}_0} \mathbb{P} [\tau_k = m] = \sum_{m = 0}^n
    b_{n - m} u_m = a_n, 
  \end{align*}
  where in the fourth equality we used independence.
\end{proof}

Note that one could use the argument for $b_n$ of the form $\mathbb{E} [f (\{ X \tau_1,
\tau_1 + k \}_{0 \leqslant k \leqslant T_1}) g (T_1)]$, where
$f, g$ are real functions, as long as $(b_n)$ is an absolutely summable sequence (of well-defined
finite elements).

\section{Slutsky's Theorem in the Skorohod topology}

\begin{theorem}
  \label{thm:slutsky-skorohod}Assume that $X^n, Y^n \in D ([0, T],
  \mathbb{R})$ the Skorohod space of cadlag functions or that $X^n, Y^n \in C
  ([0, T], \mathbb{R})$) so that $X^n \rightarrow X$ in distribution in $D
  ([0, T], \mathbb{R})$, where $X \in C ([0, T], \mathbb{R})$ and $\| Y^n - f
  \|_{\infty, [0, T]} \rightarrow 0$ in probability, where $f$ is a
  deterministic continuous function. Then $X^n + Y^n \rightarrow X + f$ in
  distribution in $D ([0, T], \mathbb{R})$, or in
  $C\left(\left[0,T\right], \ensuremath{\mathbb{R}}\right)$ respectively.
\end{theorem}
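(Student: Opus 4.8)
The plan is to run the classical \emph{converging-together} (Slutsky) argument, which is valid in any metric space, taking the Skorohod space $D([0,T],\mathbb{R})$ with its $J_1$ metric $d$ (resp.\ $C([0,T],\mathbb{R})$ with the uniform metric) as the underlying space. The starting point is the decomposition
\[ X^n + Y^n = (X^n + f) + (Y^n - f), \]
in which the first summand converges in distribution and the second is a uniformly small perturbation.

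For the first summand I would apply the continuous mapping theorem to the translation map $T_f\colon D([0,T],\mathbb{R}) \to D([0,T],\mathbb{R})$, $g \mapsto g + f$. The point to check is that $T_f$ is continuous on all of $D([0,T],\mathbb{R})$, and this uses precisely that $f$ is continuous: if $g_n \to g$ in $J_1$, choose increasing time changes $\lambda_n$ of $[0,T]$ with $\|\lambda_n - \mathrm{id}\|_{\infty,[0,T]} \to 0$ and $\|g_n \circ \lambda_n - g\|_{\infty,[0,T]} \to 0$; then $(g_n + f)\circ\lambda_n = g_n\circ\lambda_n + f\circ\lambda_n$, and $f\circ\lambda_n \to f$ uniformly by uniform continuity of $f$ on the compact interval $[0,T]$, so $(g_n + f)\circ\lambda_n \to g + f$ uniformly, i.e.\ $T_f(g_n) \to T_f(g)$ in $J_1$. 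Hence $X^n \to X$ in distribution in $D$ yields $X^n + f \to X + f$ in distribution in $D$, and since $X, f \in C([0,T],\mathbb{R})$ the limit is continuous. In the case $X^n, Y^n \in C([0,T],\mathbb{R})$ this step is immediate, translation by a fixed function being an isometry for the uniform metric.

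For the second summand I would use the elementary estimate $d(g + h_1, g + h_2) \le \|h_1 - h_2\|_{\infty,[0,T]}$ (take the identity time change in the definition of $d$), so that
\[ d\big(X^n + Y^n,\, X^n + f\big) \le \|Y^n - f\|_{\infty,[0,T]} \xrightarrow[n\to\infty]{} 0 \quad\text{in probability}, \]
with the left-hand side being exactly $\|Y^n - f\|_{\infty,[0,T]}$ in the $C([0,T],\mathbb{R})$ case. It then remains to invoke the converging-together lemma (the metric-space form of Slutsky's theorem; see e.g.\ \cite{Bil99}): if $A^n, B^n$ are random elements of a metric space with $d(A^n, B^n) \to 0$ in probability and $B^n$ converges in distribution to $B$, then $A^n$ converges in distribution to $B$. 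Applying this with $A^n = X^n + Y^n$, $B^n = X^n + f$ and $B = X + f$ gives the assertion.

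The only step requiring genuine care is the continuity of $T_f$ in the $J_1$ topology — and this is exactly where the hypothesis that $f$ (hence also the limit $X + f$) is continuous is used; the remaining steps are the standard Slutsky mechanism and present no real difficulty.
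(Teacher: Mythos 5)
Your proof is correct, and it reaches the conclusion by a more modular route than the paper. You and the paper share the same underlying decomposition (compare $X^n+Y^n=(X^n+f)+(Y^n-f)$ with the paper's splitting of $|\mathbb{E}[\Phi(X^n+Y^n)-\Phi(X+f)]|$ into a $\tilde{\Phi}(X^n)-\tilde{\Phi}(X)$ term and a $\Phi(X^n+Y^n)-\Phi(X^n+f)$ term) and the same two key facts: continuity of translation by the continuous function $f$ in the $J_1$ topology, and the bound $d(g+h_1,g+h_2)\leqslant\|h_1-h_2\|_{\infty,[0,T]}$ obtained from the identity time change. The difference is in how the final step is handled: you outsource it to the metric-space converging-together lemma (Billingsley's Theorem 3.1), whereas the paper proves that step from scratch with test functions $\Phi\in C_b(D,\mathbb{R})$, using tightness of $(X^n)$ and equicontinuity of $\Phi$ on a compact set $\tilde{K}_\eta=K_\eta+f$ --- in effect an inline reproof of the converging-together lemma. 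Your version is shorter and cleaner provided the cited lemma is taken as known (its hypotheses are satisfied here, since $D([0,T],\mathbb{R})$ with the Skorohod topology is separable, so $d(X^n+Y^n,X^n+f)$ is measurable); the paper's version is self-contained. A further merit of your write-up is that you verify explicitly, via time changes and uniform continuity of $f$ on $[0,T]$, that $g\mapsto g+f$ is $J_1$-continuous, a point the paper compresses into the one-line claim that $\tilde{\Phi}(g)=\Phi(g+f)$ is continuous ``by the continuity of $f$''.
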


\begin{proof}
  We shall cover the case $D = D ([0, T], \mathbb{R})$, the case of the
  uniform topology is similar. Let $\Phi \in C_b (D, \mathbb{R})$ and
  fix $\eta > 0$. The proof is completed if $\limsup_{n \rightarrow \infty} | \mathbb{E}
  [\Phi (X^n + Y^n) - \Phi (X + f)] | < \eta$. Define $\tilde{\Phi} (g)
  \assign \Phi (g + f)$. Note that $\tilde{\Phi} (g)$ is bounded and
  continuous in $D$ by the continuity of $f$. Now
  \begin{align*}
    & \limsup_{n \rightarrow \infty} | \mathbb{E} [\Phi (X^n + Y^n) - \Phi (X +
    f)] |    \\
    & \quad \leqslant \limsup_{n \rightarrow \infty} | \mathbb{E} [\Phi (X^n +
    Y^n) - \Phi (X^n + f)] | + \limsup_{n \rightarrow \infty} | \mathbb{E}
    [\Phi (X^n + f) - \Phi (X + f)] |  \\
    & \quad = \limsup_{n \rightarrow \infty} | \mathbb{E} [\Phi (X^n + Y^n) -
    \Phi (X^n + f)] | + \limsup_{n \rightarrow \infty} | \mathbb{E}
    [\tilde{\Phi} (X^n) - \tilde{\Phi} (X)] |  \\
    & \quad = \limsup_{n \rightarrow \infty} | \mathbb{E} [\Phi (X^n + Y^n) -
    \Phi (X^n + f)] | .
  \end{align*}
  However, since the Skorohod distance $d$  on $D$ is controlled by the uniform distance
  which is homogeneous, we have $d (g + h, f + h) \leqslant \| f - g
  \|_{\infty, [0, T]}$ for any $h$,
  we have
  \begin{align*}
    & | \mathbb{E} [\Phi (X^n + Y^n) - \Phi (X^n + f)] |  \\
    & \: = | \mathbb{E} [(\Phi (X^n + Y^n) - \Phi (X^n + f)) (1_{d (X^n + Y^n,
    X^n + f) \geqslant \varepsilon} + 1_{d (X^n + Y^n, X^n + f) <
    \varepsilon})] |  \\
    & \: \leqslant 2 \| \Phi \|_{\infty} \mathbb{P} [d (X^n + Y^n, X^n + f)
    \geqslant \varepsilon] + | \mathbb{E} [(\Phi (X^n + Y^n) - \Phi (X^n + f))
    1_{d (X^n + Y^n, X^n + f) < \varepsilon}] |  \\
    & \: = 2 \| \Phi \|_{\infty} \mathbb{P} [\| Y^n - f \|_{\infty, [0, T]}
    \geqslant \varepsilon] + | \mathbb{E} [(\Phi (X^n + Y^n) - \Phi (X^n + f))
    1_{d (X^n + Y^n, X^n + f) < \varepsilon}] |  \\
    & \: \simeq | \mathbb{E} [(\Phi (X^n + Y^n) - \Phi (X^n + f)) 1_{d (X^n +
    Y^n, X^n + f) < \varepsilon}] | .
  \end{align*}
  By tightness of $X^n$ we have a compact $K_{\eta}$ so that $\mathbb{P} (X^n
  \nin K_{\eta}) < \frac{\eta}{4 \| \Phi \|_{\infty}}$. Note that also
  $\tilde{K}_{\eta} \assign K_{\eta} + f : = \{ g + f : g \in K_{\eta} \}$ is
  compact. Since $\Phi$ is continuous it is equicontinuous on
  $\tilde{K}_{\eta}$. Therefore, there exists some $\varepsilon_0 > 0$ so that
  if $v \in \tilde{K}_{\eta}$ and $u \in D$ are so that $d (u, v) <
  \varepsilon_0$, then$| \Phi (u) - \Phi (v) | < \eta / 2$. Note that $X^n \in
  K_{\eta}$ if and only if $X^n + f \in \tilde{K}_{\eta}$ and therefore, if we
  choose $\varepsilon \leqslant \varepsilon_0$ we have
  \begin{align*}
   & | \mathbb{E} [(\Phi (X^n + Y^n) - \Phi (X^n + f)) 1_{d (X^n + Y^n, X^n +
    f) < \varepsilon}] |  \\
   & \quad \leqslant 2 \| \Phi \|_{\infty}  | \mathbb{P} (X^n \nin
    K_{\varepsilon}) | + | \mathbb{E} [(\Phi (X^n + Y^n) - \Phi (X^n + f))
    1_{d (X^n + Y^n, X^n + f) < \varepsilon, X^n + f \in \tilde{K}_{\eta}}] |
     \\
    & \quad < \eta / 2 + \eta / 2\mathbb{P} (d (X^n + Y^n, X^n + f) <
    \varepsilon, X^n + f \in \tilde{K}_{\eta}) < \eta .
  \end{align*}
\end{proof}

\section{Rough Donsker Theorem}\label{app:donsker}
Let $Z_k =
\sum_{\ell = 1}^k Y_\ell, k\in\mathbb{N}$ be a delayed random walk on $\mathbb{R}^d$ starting at $Z_0=0$, that is $(Y_\ell)_{\ell \in \mathbb{N}}$ are independent and have the same distribution for $\ell\geqslant 2$. Assume that $\mathbb{E} [Y_\ell] = 0$ and $\mathbb{E} [| Y_\ell |^2] <\infty$ for $\ell = 1, 2$.
Let $Z^{(n)}$ and $\mathbb{Z}^{(n)}$ be defined as in
{\eqref{eq:interpolation}}, then \eqref{eq:IPforZ} holds.

\begin{proof}
  The convergence in distribution $Z^{(n)} \xrightarrow[n \rightarrow
  \infty]{} B^Z$ in the uniform topology on $C ([0, T] ;
  \mathbb{R}^d)$ is Donsker's Theorem, cf.{\tmabbr{}}
  \cite{Don51,Bil99}. Since $\mathbb{E} [|
  (Z^{(n)}_T)^{\otimes 2} |] = T\mathbb{E} [| Z_1^{\otimes 2} |] < \infty$,
  the conditions of Theorem 2.2 of Kurtz-Protter {\cite{KP91}} are fulfilled
  (and particularly their Example 3.1 with $m = 2$) and therefore we conclude
  the joint convergence of $(\hat{Z}^{(n)}, \hat{\mathbb{Z}}^{(n)})$ to
  $\left(B^Z, \mathbb{B}^{\mathrm{It{\hat o}}}\right)$ in the Skorohod topology on
  the space $D ([0, T] ; \mathbb{R}^d \times \mathbb{R}^{d \times d})$ of
  c{\`a}dl{\`a}g functions $\nobracket [0, T] \rightarrow \mathbb{R}^d \times
  \mathbb{R}^{d \times d})$, where $\hat{Z}^{(n)}_t = \frac{1}{\sqrt{n}}
  Z_{\lfloor tn \rfloor}$, $\hat{\mathbb{Z}}^{(n)}_t = \frac{1}{n}  \sum_{r =
  1}^{\lfloor nt \rfloor} Z_{r - 1} \otimes Z_{r - 1, r}$, and
  $\mathbb{B}^{\mathrm{It{\hat o}}}$ is the It{\^o} iterated integral of $B$. Notice that
  \[ \mathbb{Z}^{(n)}_t - \hat{\mathbb{Z}}^{(n)}_t = \frac{1}{2 n}  \sum_{r =
     1}^{\lfloor nt \rfloor} Z_{r - 1, r}^{\otimes 2} - \frac{nt - \lfloor nt
     \rfloor}{2 n} Z_{\lfloor nt \rfloor, \lfloor nt \rfloor + 1}^{\otimes 2}
  \]
  which converges to $\mathbb{E} [Y_2 \otimes Y_2]  \frac{t}{2} = \frac{1}{2}
  [B^Z]_t$ in probability in the uniform norm on $[0, T] $ by the ergodic
  theorem and the integrability of the limit. Indeed, on the
  diagonal
  \[ \mathbb{E} \| (\hat{\mathbb{Z}}^{(n)} -\mathbb{Z}^{(n)})^{i, i} \|_{1,
     [0, T]} =\mathbb{E} [| (\hat{\mathbb{Z}}^{(n)}_T -\mathbb{Z}^{(n)}_T)^{i,
     i} |] \leqslant \frac{T}{2}  ([Y_1^2] +\mathbb{E} [Y_2^2]) < \infty .
     \]
  The point-wise limit is
  identified by the ergodic theorem, and altogether this gives
  \[ \left\| (\hat{\mathbb{Z}}^{(n)} -\mathbb{Z}^{(n)})^{i, i}
     -\mathbb{E}[Y_2 \otimes Y_2] \frac{\cdummy}{2} \right\|_{\infty, [0, T]}
     \xrightarrow[n \rightarrow \infty]{} 0 \text{ in probability} . \]
  The off-diagonal terms then follow by polarization. It is straight-forward
  to conclude then the joint convergence of $(Z^{(n)}_t,
  \mathbb{Z}^{(n)}_t)_{0 \leqslant t \leqslant T}$ to $(B^Z, \mathbb{B}^Z) =
  \left( B^Z, \mathbb{B}^{\mathrm{It{\hat o}}} + \frac{1}{2} [B^Z,B^Z]_t \right)$ in
  the Skorohod topology on the space $C ([0, T] ; \mathbb{R}^d \times
  \mathbb{R}^{d \times d})$.

  By Lemma \ref{lem:tightness of norms is enough} together with
  {\eqref{eq:p-var norm of pair}} we only need to show the tightness of the
  sequence of random variables $\| Z^{(n)} \|_{p, [0, T]}$ and $\|
  \mathbb{Z}^{(n)} \|_{p / 2, [0, T]}$, for all $p > 2$. For the first level
  note first that
  \begin{equation}\label{eq:tightness p-var first level}
    \mathbb{E} [\| Z^{(n)} \|^2_{p, [0, T]}] \lesssim [[Z^{(n)}, Z^{(n)}]_T]
    \lesssim T (\mathbb{E} [Y_2^2] +\mathbb{E} [Y_1^2])
  \end{equation}
  by L{\'e}pingle's $p$-variation
  inequality {\cite{Le76}} combined with the BDG inequality. For tightness of the second level, we claim first that for the
  It{\^o} sum we have
  \[ \mathbb{E} [\| \hat{\mathbb{Z}}^{(n)} \|_{p / 2, [0, T]}] \lesssim
     [[Z^{(n)}, Z^{(n)}]_T] \lesssim T (\mathbb{E} [Y_2^2] +\mathbb{E}
     [Y_1^2]) . \]
  Indeed, this follows from Theorem 1.1 of {\cite{Zor20}}, which is an off-diagonal
  version to the L{\'e}pingle $p$-variation BDG inequality
  (one can also use \cite{FrZor20} or Proposition 3.8 of {\cite{DOP}}). Since
  \[ \text{} \mathbb{E} [\| \mathbb{Z}^{(n)}_t - \hat{\mathbb{Z}}^{(n)}_t
     \|_{1, [0, T]}] \leqslant \frac{T}{2}  ([Y_1^2] +\mathbb{E} [Y_2^2]) <
     \infty, \]
  we conclude that
  \begin{eqnarray}\label{eq:tightness p/2-var}
    \mathbb{E} [\| \mathbb{Z}^{(n)} \|_{p / 2, [0, T]}] & \lesssim &
    \mathbb{E} [\| \hat{\mathbb{Z}}^{(n)} \|_{p / 2, [0, T]}] +\mathbb{E} [\|
    \mathbb{Z}^{(n)} - \hat{\mathbb{Z}}^{(n)} \|_{p / 2, [0, T]}] \nonumber\\
    & \lesssim & T (\mathbb{E} [Y_2^{\otimes 2}] +\mathbb{E} [Y_1^{\otimes
    2}]) \suppressed{,}
  \end{eqnarray}
  and the proof is complete.
\end{proof}

\end{document}